\theoremstyle{plain}
\newtheorem{theorem}{Theorem}
\newtheorem{lemma}[theorem]{Lemma}
\newtheorem{proposition}[theorem]{Proposition}
\newtheorem{proposition*}{Proposition}
\newtheorem{corollary}[theorem]{Corollary}
\theoremstyle{definition}
\newtheorem{example}{Example}
\theoremstyle{remark}
\numberwithin{equation}{section}
\newcommand{\E}{\mathbb{E}}
\newcommand{\F}{\mathscr{F}}
\newcommand{\p}{\mathbb{P}}
\DeclareMathOperator{\Var}{\text{Var}}
\DeclareMathOperator{\N}{\mathbb{N}}
\DeclareMathOperator{\Z}{\mathbb{Z}}
\DeclareMathOperator{\X}{\mathcal{X}}
\DeclareMathOperator{\A}{\mathscr{A}}
\newcommand{\ltext}[2]{%
  \@bsphack
  \csname phantomsection\endcsname 
  \def\@currentlabel{#1}{\label{#2}}%
  \@esphack
}
\newcommand{\subjclass}[2][1991]{%
  \let\@oldtitle\@title%
  \gdef\@title{\@oldtitle\footnotetext{#1 \emph{Mathematics subject classification.} #2}}%
}
\newcommand{\keywords}[1]{%
  \let\@@oldtitle\@title%
  \gdef\@title{\@@oldtitle\footnotetext{\emph{Key words and phrases.} #1.}}%
}
\title{Limit theorems for a random walk with memory perturbed by a dynamical system}
\keywords{Elephant Random Walk, Dynamical System, Strong Law of Large Numbers, Central Limit Theorem}
\author{Cristian F. Coletti
    \footnote{Centro de Matem\'atica, Computa\c{c}\~ao e Cogni\c{c}\~ao, Universidade Federal do ABC, Av. dos Estados, 5001, 09210-580 Santo Andr\'e, S\~ao Paulo, Brazil (\url{cristian.coletti@ufabc.edu.br}, \url{lucas.roberto@ufabc.edu.br}, \url{denis.luiz@ufabc.edu.br}).}
    \and Lucas R. de Lima
    \footnotemark[1]
    \and Renato J. Gava \footnote{Departamento de Estat\'istica, Universidade Federal de S\~ao Carlos, Rod. Washington Luiz, km 235, S\~ao Carlos, S\~ao Paulo, 13565-905, Brazil (\url{ gava@ufscar.br})}
    \and  Denis A. Luiz
    \footnotemark[1]}
\date{}
\begin{document}

\nocite{*}

\maketitle

\begin{abstract}
We introduce a new random walk with unbounded memory obtained as a mixture of the Elephant Random Walk and the Dynamic Random Walk which we call the Dynamic Elephant Random Walk (DERW). As a consequence of this mixture the distribution of the increments of the resulting random process is time dependent. We prove a strong law of large numbers for the DERW and, in a particular case, we provide an explicit expression for its speed. Finally, we give sufficient conditions for the central limit theorem and the law of the iterated logarithm to hold.
\end{abstract}


\section{Introduction}

In this work we introduce a new random walk with memory. The model is obtained as a mixture of two well known random walks, namely the Elephant Random Walk and the Dynamic Random Walk and it is inspired from the theory of Markov switching models where switching among regimes occurs randomly according to a Markov process, see Hamilton \cite{hamilton1989new}. The mixture of models has already been considered in the literature of interacting particle systems. For instance the mixture of spin-flip dynamics and symmetric exclusion processes are known under the name of diffusion-reaction processes, see  Belitsky et al. \cite{belitsky2001} and references there in, and have been intensively studied.

Recently, the Elephant Random Walk (ERW) and other random walks with memory have received considerable attention by many authors, see \cite{baur2020,baur2016,benari2019,bercu2017,bercu2019center,bercu2019multi,bercu2019hypergeometric,bertoin2020universality,dasilva2020,dimolfetta2018,kubota2019,marquioni2019,vazquez-guevara2019}. The ERW was introduced by  Sch\"{u}tz and Trimper \cite{schuetz2004} and its dynamic is defined as follows. The elephant starts at the origin and moves one step to the right with probability $q$ and one step to the left with
probability $1-q$. At time $n+1$, it chooses at random and with equal probability a number $n^{\prime}$ from the set $\{1, \ldots , n\}$. Then the walker takes, with probability $p$, one step in the same direction of the step given at time $n^{\prime}$ and, with probability $1-p$, it takes one step in the opposite direction. See section \ref{themodel} for a formal definition of the ERW.

The Dynamic Random Walk (DRW) is a non-homogeneous Markov chain which was introduced by  Guillotin-Plantard \cite{guillotin-plantard2000} and whose transition probability of each step is time dependent. In this paper we consider DRW on $\mathbb{Z}$ which evolves in the following manner: at each time a walker takes one step to the right or one step to the left with probability given by a function of the orbit of a given discrete-time dynamical system. See section \ref{themodel} for a formal definition of the DRW.

This paper is organized as follows. In section \ref{themodel} we introduce the Dynamic Elephant Random Walk. In section \ref{s3} we state our main results and we exhibit some examples where these results hold. In section \ref{s4} we prove the strong law of large numbers, the central limit theorem and the law of the iterated logarithm for the Dynamic Elephant Random Walk.

\section{The model}\label{themodel}
We begin this section by defining the Elephant Random Walk (ERW) $(V_n)_{n \geq 0}$ with $V_0 = 0$ and increments $(W_n)_{n \geq 1}$. Let $p, q \in [0,1]$, then

\begin{align*}
    &\p^E[W_1=1]=q,\quad \quad \p^E[W_1=-1]=1-q, 
    \end{align*}
    and
    \begin{align*}
    &\p^E[W_n=\eta|W_1,\dots,W_{n-1}]=\frac{1}{2n}\sum_{k=1}^{n-1}(1+(2p-1)W_k\eta).
\end{align*}
The random variables $(V_n)_{n \geq 0}$ and $(W_n)_{n \geq 1}$ are related by the formula $V_n = \sum_{i=1}^n W_i$ where $n \geq 1$. Denote by $p^E_n (.)$ the probability mass function governing the law of the increments of the ERW.

Then we define the Dynamic Random Walk (DRW). Let $(\X,\A,\mu,T)$ be a dynamical system where $(\X,\A,\mu)$ is a probability space and $T:\X\to\X$ is a $\mu$-invariant transformation, \emph{i.e.}, $\mu(A)=\mu\big(T(A)\big)$ for all $A\in\A$. The DRW is the stochastic process $(Y_n)_{n \geq 0}$ with $Y_0=0$ and increments $(Z_n)_{n \geq 1}$ defined by
\begin{equation*}
    \p_x^D[Z_n=1]=f(T^nx)
    \quad\text{and}\quad
    \p_x^D[Z_n=-1]=1-f(T^nx)
\end{equation*}
where $f:\X\to[0,1]$ is $\A$-measurable. Here, $T^0=id$ and, for $n \geq 1, T^{n}:=T\circ T^{n-1}$. The random variables $(Y_n)_{n \geq 0}$ and $(Z_i)_{i \geq 1}$ are related by the formula $Y_n = \sum_{i=1}^n Z_i$ where $n \geq 1$. Denote by $p^D_{x,n} (.)$ the probability mass function governing the law of the increments of the DRW

Now we introduce the Dynamic Elephant Random Walk (DERW) on $\Z$. Let $(\mathcal{X},\A, \mu,T)$ and $f:\X\to[0,1]$ be as in the definition of the DRW. Fix $g:R \subset \mathbb{R} \times \mathbb{N} \to[0,1]$ and $p,q \in [0,1]$. The DERW is the random walk $(S_n)_{n \geq 0}$ with $S_0 = 0$ and increments $(X_n)_{n \geq 1}$ defined by
\begin{equation*}
    \p_x[X_n=\eta]=g(\alpha,n) p^E_n(\eta)+\big(1-g(\alpha,n)\big)p^D_{x,n}(\eta).
\end{equation*}
Here $S_n = \sum_{i=1}^n X_i$. We will denote by $\E_x$ the operator expectation induced by $\p_x$.

\section{Main results}\label{s3}
Let $(S_n)_{n \geq 0}$ be the DERW with increments $(X_n)_{n\geq1}$. Set $\F_n:=\sigma\langle X_1,\dots,X_n\rangle$ with $\F_0=\{\emptyset,\X\}$. We will exclude the case where $g(\alpha,2)=1$ and $p=0$ without further notice, which is technically inconvenient for our results. To avoid cumbersome notation, we will write for short $\alpha_n:=g(\alpha,n)$ for a given $\alpha \in \mathbb{R}$. Set
\begin{eqnarray*}
    \ell_{\inf}(\alpha):= \liminf\limits_{n\to +\infty}g(\alpha,n), \quad \ell_{\sup}(\alpha):= \limsup\limits_{n\to +\infty}g(\alpha,n),\\
    \text{and} \quad \ell(\alpha):= \lim\limits_{n\to +\infty}g(\alpha,n), \;\text{when it exists.}
\end{eqnarray*}

\noindent\textbf{Condition (T).} It is convenient to consider the DERW satisfying the following condition:
\begin{equation*}
    p\cdot \ell_{\sup}(\alpha) <1, \ltext{T}{transition:property}
\end{equation*}
which will be referred throughout the text as condition \eqref{transition:property}.

\begin{theorem}[Strong Law of Large Numbers]\label{SLLN}
If the DERW satisfies condition \eqref{transition:property}, then
\begin{equation*}
    \frac{S_n - \E_x[S_{n}]}{n} \to 0 \quad \p_x-a.s.
\end{equation*}
for every $x \in \X$.
\end{theorem}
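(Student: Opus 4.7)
The plan is to express $U_n := S_n - \E_x[S_n]$ as a deterministic scaling of an explicit $\F_n$-martingale and then apply Kronecker's lemma with a normalization dictated by condition \eqref{transition:property}. Starting from the definition of the DERW, a direct computation of the conditional mean yields
\begin{equation*}
\E_x[X_n\mid\F_{n-1}] = \beta_n\, S_{n-1}+b_n,\qquad \beta_n:=\tfrac{(2p-1)\alpha_n}{n},\quad b_n:=(1-\alpha_n)\bigl(2f(T^n x)-1\bigr),
\end{equation*}
so that both $S_n$ and $\E_x[S_n]$ obey the same recursion $Y_n=(1+\beta_n)Y_{n-1}+b_n$ up to the bounded martingale-difference noise $\xi_n:=X_n-\E_x[X_n\mid\F_{n-1}]$, $|\xi_n|\leq 2$. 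Subtracting the two recursions gives $U_n=(1+\beta_n)U_{n-1}+\xi_n$. Setting $\phi_n:=\prod_{k=1}^n(1+\beta_k)$, the process $M_n:=\phi_n^{-1}U_n=\sum_{k=1}^n\phi_k^{-1}\xi_k$ is then an $\F_n$-martingale, and the theorem is equivalent to proving $\phi_n M_n/n\to 0$ almost surely.

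The key technical input is the bound $\phi_n=O(n^\gamma)$ for some $\gamma<1$. Using $|\beta_k|\leq 1/k$ together with $\log(1+\beta_k)=\beta_k+O(\beta_k^2)$, one gets $\log\phi_n=(2p-1)\sum_{k=1}^n\alpha_k/k+O(1)$. For $p\leq 1/2$ every factor satisfies $1+\beta_k\leq 1$, so $\phi_n\leq 1$ and there is nothing more to do; for $p>1/2$, the tail estimate $\alpha_k\leq\ell_{\sup}(\alpha)+\varepsilon$ for $k$ large gives $\phi_n\leq C\,n^{(2p-1)(\ell_{\sup}(\alpha)+\varepsilon)}$. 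A short case check based on $p,\ell_{\sup}(\alpha)\in[0,1]$ shows that $(2p-1)\ell_{\sup}(\alpha)\geq 1$ forces $p=\ell_{\sup}(\alpha)=1$, which is excluded by \eqref{transition:property}; hence $\phi_n=o(n)$ and in particular $\phi_n\leq C\,n^{\gamma}$ with some $\gamma<1$.

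To conclude, I would apply Kronecker's lemma with $a_n:=n/\phi_n$. The elementary bound $|\beta_{n+1}|\leq 1/(n+1)<1/n$ gives $a_{n+1}>a_n$, while the preceding step ensures $a_n\to\infty$. The crucial cancellation $\phi_k^{-1}\xi_k/a_k=\xi_k/k$ reduces the Kronecker hypothesis to the a.s.\ convergence of $\sum_{k\geq 1}\xi_k/k$, which is a martingale whose conditional quadratic variation is bounded by $4\sum 1/k^2<\infty$ and therefore converges a.s.\ by $L^2$-martingale convergence. Kronecker's lemma then gives $M_n/a_n\to 0$ a.s., i.e.\ $U_n/n=\phi_n M_n/n\to 0$ a.s., proving the theorem.

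The main obstacle I anticipate is the second step: the sequence $(\alpha_k)$ need not converge, so one cannot argue directly with a limit $\ell(\alpha)$ and must instead pass cleanly from the pointwise tail bound $\alpha_k\leq\ell_{\sup}(\alpha)+\varepsilon$ to the logarithmic bound on $\sum\alpha_k/k$. Equally delicate is the verification that condition \eqref{transition:property} really does rule out the boundary case $(2p-1)\ell_{\sup}(\alpha)=1$; once $\phi_n=o(n)$ is secured, the Kronecker step is robust and works uniformly across the various regimes of $(p,\alpha_n)$.
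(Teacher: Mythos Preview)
Your proposal is correct and follows essentially the same route as the paper: both identify the product $\phi_n$ (the paper's $a_n$), show via condition \eqref{transition:property} that $n/\phi_n$ is nondecreasing and tends to infinity, and then reduce by Kronecker's lemma to the a.s.\ convergence of $\sum_k \xi_k/k$, which follows from the bounded conditional second moments. One minor indexing slip: from the paper's conditional-mean formula one gets $\beta_n=(2p-1)\alpha_n/(n-1)$ rather than $(2p-1)\alpha_n/n$, so your $\phi_n$ is exactly the paper's $a_n$; this does not affect the structure of your argument.
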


It is worth remarking that if $p=1/2$ and the limit $\ell (\alpha )$ exists then we may compute the speed of the DERW.

\begin{corollary}\label{limitp2}
Let $(S_n)_{n \geq 0}$ be the DERW. If $p=1/2$ and $\ell(\alpha)>0$ then, for $\mu$-almost every $x \in \X$, 
\begin{equation*}
    \lim_{n\to+\infty}\frac{S_n}{n}=(1-\ell(\alpha))(2\E_{\mu}[f|\mathcal{I}](x)-1)\quad\p_x-a.s. .
\end{equation*}
where $\mathcal{I}$ stands for the $\sigma$-algebra of $T$-invariant sets.
\end{corollary}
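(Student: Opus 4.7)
The plan is to decompose
$$\frac{S_n}{n} \;=\; \frac{S_n - \E_x[S_n]}{n} \;+\; \frac{\E_x[S_n]}{n}$$
and analyze each piece separately. Under the hypothesis $p = 1/2$, condition \eqref{transition:property} reduces to $\ell_{\sup}(\alpha) < 2$, which holds trivially because $g$ takes values in $[0,1]$. Hence Theorem \ref{SLLN} applies and the first summand converges to $0$ $\p_x$-almost surely, for every $x \in \X$. The task is therefore reduced to identifying the limit of the deterministic sequence $\E_x[S_n]/n$.

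For this, I would compute the conditional expectation of the increments directly from the mixture definition of the DERW. Unraveling the ERW conditional mass function gives
$$\E_x[X_n \mid \F_{n-1}] \;=\; \alpha_n \cdot \frac{2p-1}{n}\,S_{n-1} \;+\; (1-\alpha_n)\bigl(2f(T^n x) - 1\bigr),$$
and the first term vanishes when $p = 1/2$. Since $f(T^i x)$ is deterministic given $x$, taking expectations and summing over $i = 1, \dots, n$ yields the closed form
$$\E_x[S_n] \;=\; \sum_{i=1}^{n} (1-\alpha_i)\bigl(2f(T^i x) - 1\bigr).$$

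Next I would invoke Birkhoff's pointwise ergodic theorem for the $\mu$-invariant transformation $T$ applied to the bounded observable $2f - 1$: for $\mu$-almost every $x \in \X$,
$$\frac{1}{n}\sum_{i=1}^{n}\bigl(2f(T^i x) - 1\bigr) \;\longrightarrow\; 2\,\E_\mu[f \mid \mathcal{I}](x) - 1.$$
To transfer this through the weights $(1-\alpha_i)$, I would split $(1-\alpha_i) = (1-\ell(\alpha)) + (\ell(\alpha) - \alpha_i)$. The first piece produces $(1-\ell(\alpha))\bigl(2\E_\mu[f \mid \mathcal{I}](x) - 1\bigr)$ in the limit via Birkhoff. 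The second piece is absorbed by a Toeplitz-type bound: since $\alpha_n \to \ell(\alpha)$ and $|2f(T^i x) - 1| \leq 1$,
$$\left|\frac{1}{n}\sum_{i=1}^{n}(\ell(\alpha) - \alpha_i)\bigl(2f(T^i x) - 1\bigr)\right| \;\leq\; \frac{1}{n}\sum_{i=1}^{n}|\ell(\alpha) - \alpha_i| \;\longrightarrow\; 0$$
by Cesàro averaging of a null sequence. Combining the two contributions gives the claimed speed.

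I do not anticipate a substantive obstacle: the triviality of condition \eqref{transition:property} at $p = 1/2$, the standard ERW conditional-expectation calculation, and the pairing of Birkhoff with a Cesàro argument are all routine once assembled in this order. The hypothesis $\ell(\alpha) > 0$ does not appear to be used in the argument; it seems to be imposed only to ensure a nontrivial ERW contribution in the mixture, since the same computation goes through for $\ell(\alpha) = 0$ and would simply recover the pure DRW speed.
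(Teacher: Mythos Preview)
Your proposal is correct and follows essentially the same route as the paper: invoke Theorem~\ref{SLLN} to reduce to the deterministic limit of $\E_x[S_n]/n$, then combine the explicit formula for $\E_x[S_n]$ (Proposition~\ref{media} in the paper, with $a_n\equiv 1$) with Birkhoff's theorem and the same Ces\`aro replacement of $\alpha_k$ by $\ell(\alpha)$. Your closed form for $\E_x[S_n]$ omits the additive constant $\alpha_1(2q-1)$ coming from the first step (the ERW part of $X_1$ has mean $2q-1$, not zero), but this is harmless after dividing by $n$; your observation that the hypothesis $\ell(\alpha)>0$ is not actually used in the argument is also accurate.
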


Corollary \ref{limitp2} provides sufficient conditions for the existence of $\lim\limits_{n\to+\infty}\frac{S_n}{n}$. In the following example we provide a method to compute explicitly the speed of the DERW under some mild conditions.

\begin{example}
Consider a DERW satisfying condition \eqref{transition:property}. In view of Birkhoff's Ergodic Theorem, let $\mathcal{E}\subset\X$ be a set of full measure such that for every $x\in \mathcal{E}$,
\begin{equation} \label{Birkhoff:eq}
    \lim_{n\to+\infty}\frac{1}{n}\sum_{k=0}^{n-1}f(T^kx)=\E_\mu[f|\mathcal{I}](x).
\end{equation} 
Assume that, for a given  $x\in \mathcal{E}$, $\ell(\alpha)$, $\lim\limits_{n\to+\infty}f(T^nx)$ and $\lim\limits_{n\to+\infty}\frac{S_n}{n}$ ($\p_x-a.s.$) do exist. From Cesàro Mean Convergence Theorem we have that
\begin{equation*}
\lim_{n\to+\infty}\frac{1}{n}\sum_{k=1}^nf(T^kx)=\lim_{n\to+\infty}f(T^nx).
\end{equation*}
Denote by $S_x$ the $\p_x$-a.s. limit of $S_n/n$.
Then, invoking Theorem \ref{SLLN} we get $\lim\limits_{n\to+\infty}\frac{\E_x[S_n]}{n}=S_x\;\;\p_x-a.s.$ It is easy to verify that
\begin{eqnarray*}
\lim_{n\to+\infty}\E_x[X_n] = (2p-1)\ell(\alpha)S_x+(1-\ell(\alpha))(2\E_{\mu}[f|\mathcal{I}](x)-1),
\end{eqnarray*}
see equation \eqref{conditional}. Invoking again Cesàro Mean Convergence Theorem we get
\begin{equation*}
\lim_{n\to+\infty}\frac{\E_x[S_n]}{n}=(2p-1)\ell(\alpha)S_x+(1-\ell(\alpha))(2\E_{\mu}[f|\mathcal{I}](x)-1).
\end{equation*}
Therefore
\begin{equation*}
S_x=\frac{1-\ell(\alpha)}{1-(2p-1)\ell(\alpha)}(2\E_{\mu}[f|\mathcal{I}](x)-1)
\end{equation*}
which agrees with the conclusion of Corollary \ref{limitp2} when $p=1/2$.
\end{example}

In what follows, we assume that the DERW satisfies \eqref{transition:property}. Let
\[
a_n:=\prod_{j=1}^{n-1}\left(1+\frac{(2p-1)g(\alpha,j+1)}{j}\right),
\]
and set $(A_n)_{n \geq 1}$ to be the sequence of non-negative constants defined by
\[
A_n^2:=\sum_{k=1}^n\frac{1}{a_k^2}(1-\E_x[X_k]^2).
\]

We state below a version of the Central Limit Theorem for the DERW followed by some variations of the same result. 

\begin{theorem}[Central Limit Theorem]\label{CLT1}
Let $(S_n)_{n \geq 0}$ be the DERW with $p \leq 3/4$ or $\ell_{\sup}(\alpha)<\frac{1}{4p-2}$. For all $x \in \X$, one has that if $p<1$ and $\ell_{\inf}(\alpha) >0$, then
\begin{equation*}
    \frac{S_n-\E_x[S_n]}{a_nA_n}\xrightarrow{\mathcal{D}}\mathcal{N}(0,1).
\end{equation*}
\end{theorem}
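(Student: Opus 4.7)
The plan is to apply a martingale central limit theorem to a suitably normalized version of the centered walk $D_n := S_n - \E_x[S_n]$. Writing $\varepsilon_n := X_n - \E_x[X_n\mid\F_{n-1}]$, the mixture definition gives
\[
\E_x[X_n\mid\F_{n-1}] = \frac{(2p-1)\alpha_n}{n-1}S_{n-1} + (1-\alpha_n)\bigl(2f(T^n x)-1\bigr),
\]
which yields the recursion $D_n = (a_n/a_{n-1})D_{n-1} + \varepsilon_n$, since $a_n/a_{n-1} = 1 + (2p-1)\alpha_n/(n-1)$. Dividing through by $a_n$ shows that $M_n := D_n/a_n$ is an $(\F_n)$-martingale with increments $\Delta M_n = \varepsilon_n/a_n$, uniformly bounded by $2/a_n$.

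Second, I would relate the deterministic normalizer $A_n^2$ to the predictable quadratic variation
\[
\langle M\rangle_n = \sum_{k=1}^n \frac{1-\E_x[X_k\mid\F_{k-1}]^2}{a_k^2},
\]
using that $X_k^2=1$. The discrepancy with $A_n^2$ factors as
\[
A_n^2-\langle M\rangle_n = \sum_{k=1}^n \frac{\bigl(\E_x[X_k\mid\F_{k-1}]-\E_x[X_k]\bigr)\bigl(\E_x[X_k\mid\F_{k-1}]+\E_x[X_k]\bigr)}{a_k^2}.
\]
The first parenthesis equals $(2p-1)\alpha_k D_{k-1}/(k-1)$, which tends to zero $\p_x$-almost surely by Theorem \ref{SLLN}, while the second is bounded. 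A Kronecker-type argument against the weights $1/a_k^2$ then gives $\langle M\rangle_n/A_n^2 \to 1$ almost surely.

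Third, I would analyze the growth of $A_n$. Under condition \eqref{transition:property} with $\ell_{\inf}(\alpha)>0$, the estimate $\log a_n = (2p-1)\sum_{j<n}\alpha_{j+1}/j + O(1)$ combined with either $p\le 3/4$ or $\ell_{\sup}(\alpha)<1/(4p-2)$ yields $a_n^2 = o(n)$, while a uniform lower bound on $1-\E_x[X_k]^2$ (using $p<1$ to keep the conditional mean away from $\pm 1$) forces $A_n\to+\infty$. With $A_n\to+\infty$ and bounded increments, the Lindeberg condition follows at once from $\max_{k\le n}|\Delta M_k|/A_n\to 0$, and the Hall--Heyde martingale CLT gives $M_n/A_n\xrightarrow{\mathcal{D}}\mathcal{N}(0,1)$, which is exactly the claimed convergence of $(S_n-\E_x[S_n])/(a_nA_n)$.

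The principal obstacle is the joint handling of the second and third steps: the dichotomy $p\le 3/4$ versus $\ell_{\sup}(\alpha)<1/(4p-2)$ plays a delicate role, ensuring simultaneously that $a_n$ grows slowly enough for $\sum 1/a_k^2$ to diverge and that the cancellation between $\langle M\rangle_n$ and $A_n^2$ occurs at the right scale. Adapting the classical martingale CLT for the ERW to accommodate both the time-inhomogeneity introduced by $\alpha_n$ and the dynamical drift $f(T^n x)$ is where most of the care is required.
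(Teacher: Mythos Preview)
Your proposal is correct and follows essentially the same route as the paper: the martingale $M_n=(S_n-\E_x[S_n])/a_n$, the comparison of $\langle M\rangle_n$ with $A_n^2$ via the SLLN-driven $o(1)$ term (your factored discrepancy is exactly the paper's $\E_x[X_n\mid\F_{n-1}]=\E_x[X_n]+o(1)$ combined with a Toeplitz/Ces\`aro averaging, which the paper writes as $(B_n^2/A_n^2)\,o(1)$), the variance lower bound from $p<1$ and $\ell_{\inf}(\alpha)>0$, and the Lindeberg condition plus Hall--Heyde. Two small points to tighten in a full write-up: the claim $a_n^2=o(n)$ fails at the boundary $p=3/4$, $\ell_{\sup}(\alpha)=1$ (one only gets $a_n^2=O(n)$, though $B_n^2\to\infty$ still holds via the harmonic series), and when $p<1/2$ the increments $2/a_k$ are not uniformly bounded in $k$, so the Lindeberg check requires the separate estimate $a_nA_n\to\infty$ that the paper supplies.
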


\begin{corollary} \label{cor:D}
Let the hypothesis of $p<1$ and $\ell_{\inf}(\alpha) >0$ in Theorem \ref{CLT1} be replaced by condition \eqref{transition:property} jointly with
\begin{enumerate}[(D$_1$)]
    \item \ltext{\normalfont D$_1$}{D1} If $p=1$, then $\liminf\limits_{n\to+\infty} f(T^n(x))>0$ and $\limsup\limits_{n\to+\infty} f(T^n(x))<\frac{1-\ell_{\sup}(\alpha)}{1-\ell_{\inf}(\alpha)}$;
    and
    \item \ltext{\normalfont D$_2$}{D2} If $\ell_{\inf}(\alpha) =0$, then
    $\liminf\limits_{n\to+\infty} f(T^n(x))>0$, $\limsup\limits_{n\to+\infty} f(T^n(x))<
            1-p\cdot\ell_{\sup}(\alpha)$, and $\ell_{\sup}(\alpha) <1$.
\end{enumerate}

\noindent Then the conclusion of Theorem \ref{CLT1} remains true.
\end{corollary}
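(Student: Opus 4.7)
The strategy is to piggyback on the proof of Theorem \ref{CLT1}. That proof proceeds by applying a martingale central limit theorem (in the spirit of Brown) to a suitably normalized version of $S_n-\E_x[S_n]$, and the only point at which the hypothesis ``$p<1$ and $\ell_{\inf}(\alpha)>0$'' is actually used is to produce a uniform lower bound of the form $\liminf_{k\to\infty}(1-\E_x[X_k]^2)>0$. Together with \eqref{transition:property} this bound controls both the growth of $A_n^2$ and the validity of the conditional Lindeberg condition required by the martingale CLT.

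My first step is therefore to isolate, from the proof of Theorem \ref{CLT1}, the following intermediate statement: under \eqref{transition:property} alone, if $\limsup_{k\to\infty}|\E_x[X_k]|<1$, then $(S_n-\E_x[S_n])/(a_nA_n)\xrightarrow{\mathcal{D}}\mathcal{N}(0,1)$. Granted that, the corollary reduces to verifying the bound $\limsup_{k\to\infty}|\E_x[X_k]|<1$ under \eqref{transition:property} together with \eqref{D1} and \eqref{D2}. The tool for this is the recursion
\[
\E_x[X_n]=(2p-1)\alpha_n\,\frac{\E_x[S_{n-1}]}{n-1}+(1-\alpha_n)\bigl(2f(T^nx)-1\bigr)
\]
supplied by \eqref{conditional}, which exhibits $\E_x[X_n]$ as an affine combination of two quantities in $[-1,1]$.

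For case \eqref{D2} (so $\ell_{\inf}(\alpha)=0$), the plan is a subsequence argument. Along any subsequence where $\alpha_{n_j}\to 0$, the DRW term $(1-\alpha_{n_j})(2f(T^{n_j}x)-1)$ dominates, and the hypotheses $\liminf f(T^nx)>0$ and $\limsup f(T^nx)<1-p\,\ell_{\sup}(\alpha)$ are exactly what is needed to keep the resulting combination bounded away from $\pm 1$ (the upper threshold $1-p\,\ell_{\sup}(\alpha)$ arises by bounding $(2p-1)\alpha_{n_j}\E_x[S_{n_j-1}]/(n_j-1)\le p\,\ell_{\sup}(\alpha)-\ldots$ and rearranging). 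Along any subsequence where $\alpha_n$ stays bounded away from $0$, the very estimate used inside the proof of Theorem \ref{CLT1} applies unchanged. For case \eqref{D1} (so $p=1$) I would argue by contradiction: if $\limsup_n|\E_x[X_n]|=1$, then a Ces\`aro argument combined with Theorem \ref{SLLN} forces $\E_x[S_{n-1}]/(n-1)$ to accumulate at the same value $\pm 1$; feeding this back into the recursion and using $(1-\alpha_n)\ge 1-\ell_{\sup}(\alpha)-o(1)$ pins $f(T^nx)$ down to accumulating at $(1-\ell_{\sup}(\alpha))/(1-\ell_{\inf}(\alpha))$, contradicting \eqref{D1}.

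The hardest step will be the contradiction argument for case \eqref{D1} when $\ell(\alpha)$ fails to exist: then $\alpha_n$ may oscillate throughout $[\ell_{\inf}(\alpha),\ell_{\sup}(\alpha)]$ while $\E_x[S_{n-1}]/(n-1)$ drifts, and locating the accumulation points of $\E_x[X_n]$ in $[-1,1]$ requires a careful two-scale analysis matching subsequences of $\alpha_n$ with subsequences of $f(T^nx)$. This is precisely what is reflected in the asymmetric ratio $(1-\ell_{\sup}(\alpha))/(1-\ell_{\inf}(\alpha))$ appearing in \eqref{D1}, and is the only nontrivial computation in the corollary.
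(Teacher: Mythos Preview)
Your overall strategy is right: the proof of Theorem \ref{CLT1} goes through verbatim once one knows $\limsup_{n}|\E_x[X_n]|<1$, and the paper confirms this by isolating it as Lemma \ref{lm:variance:limit}. Where you diverge from the paper is in the verification of that bound, and here you are working much harder than necessary.

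The paper's argument is a one-line inequality. From the recursion and the trivial bound $|\E_x[S_{n-1}]|/(n-1)\le 1$ one gets, for every $n$,
\[
2\bigl[\alpha_n(1-p)+(1-\alpha_n)f(T^nx)\bigr]-1\;\le\;\E_x[X_n]\;\le\;2\bigl[\alpha_n p+(1-\alpha_n)f(T^nx)\bigr]-1,
\]
so it suffices that $\liminf_n\bigl[\alpha_n(1-p)+(1-\alpha_n)f(T^nx)\bigr]>0$ and $\limsup_n\bigl[\alpha_n p+(1-\alpha_n)f(T^nx)\bigr]<1$. Bounding $\alpha_n$ by $\ell_{\sup}(\alpha)$ and $1-\alpha_n$ by $1-\ell_{\inf}(\alpha)$ (up to $o(1)$) turns these into the numerical inequalities
\[
(1-p)\ell_{\inf}(\alpha)+\bigl(1-\ell_{\sup}(\alpha)\bigr)\liminf_n f(T^nx)>0,\qquad
p\,\ell_{\sup}(\alpha)+\bigl(1-\ell_{\inf}(\alpha)\bigr)\limsup_n f(T^nx)<1,
\]
and conditions \eqref{D1} and \eqref{D2} are \emph{literally} these inequalities specialized to $p=1$ and to $\ell_{\inf}(\alpha)=0$ respectively. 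No subsequences, no Ces\`aro, no contradiction.

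Your proposed route for \eqref{D1} has a genuine weak point: from $\limsup_n|\E_x[X_n]|=1$ you claim that ``a Ces\`aro argument combined with Theorem \ref{SLLN} forces $\E_x[S_{n-1}]/(n-1)$ to accumulate at $\pm 1$''. This does not follow---$\E_x[X_n]$ could equal $1$ along a sparse subsequence while its Ces\`aro means stay near $0$---and Theorem \ref{SLLN} gives no information about $\E_x[S_n]/n$ by itself. The ``two-scale analysis'' you anticipate as the hardest step is therefore both unnecessary and, as sketched, not correct; the asymmetric threshold $(1-\ell_{\sup}(\alpha))/(1-\ell_{\inf}(\alpha))$ in \eqref{D1} is not the output of a delicate matching of subsequences but simply what one gets by rearranging $p\,\ell_{\sup}(\alpha)+(1-\ell_{\inf}(\alpha))\limsup_n f(T^nx)<1$ when $p=1$.
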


\begin{corollary}\label{CLTp2}
Let $(S_n)_{n \geq 0}$ be the DERW. If $p=1/2$ and $\ell(\alpha)>0$, then, for $\mu$-almost surely $x \in \X$,
\begin{equation*}
    \frac{S_n-n(1-\ell(\alpha))(2\E_\mu[f|\mathcal{I}](x)-1)}{\sqrt{n\big(1-(1-\ell(\alpha))^2(4\E_\mu[f^2|\mathcal{I}](x)-4\E_\mu[f|\mathcal{I}](x)+1)\big)}}\xrightarrow{\mathcal{D}}\mathcal{N}(0,1).
\end{equation*}
\end{corollary}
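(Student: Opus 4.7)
The plan is to specialize Theorem \ref{CLT1} to $p=1/2$, identify the exact asymptotic of the normalizing variance via the ergodic theorem, and then shift the random centering $\E_x[S_n]$ to the deterministic centering $n(1-\ell(\alpha))(2\E_\mu[f|\mathcal{I}](x)-1)$.

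First I would note that $p=1/2$ gives $2p-1=0$, so the product $a_n$ is identically $1$ and the elephant contribution to the one-step conditional mean vanishes; from equation \eqref{conditional} (referenced after the corollary) the formula for $\E_x[X_k]$ reduces to
$$
\E_x[X_k] = (1-\alpha_k)(2f(T^k x) - 1), \qquad A_n^2 = \sum_{k=1}^n\Bigl(1 - (1-\alpha_k)^2(2f(T^k x) - 1)^2\Bigr).
$$
Expanding $(2f-1)^2 = 4f^2 - 4f + 1$, using $\alpha_k \to \ell(\alpha)$, and applying Birkhoff's ergodic theorem separately to $f$ and $f^2$ on a common full-$\mu$-measure set, a routine Cesàro manipulation yields, for $\mu$-a.e.\ $x$,
$$
\frac{A_n^2}{n} \longrightarrow \sigma_x^2 := 1 - (1-\ell(\alpha))^2\bigl(4\E_\mu[f^2|\mathcal{I}](x) - 4\E_\mu[f|\mathcal{I}](x) + 1\bigr).
$$

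Next I would verify the hypotheses of Theorem \ref{CLT1}: $p=1/2 \leq 3/4$, $p<1$, and $\ell_{\inf}(\alpha)=\ell(\alpha)>0$, while condition \eqref{transition:property} reads $\ell_{\sup}(\alpha)/2 < 1$ and is automatic. Theorem \ref{CLT1} then delivers $(S_n - \E_x[S_n])/A_n \xrightarrow{\mathcal{D}} \mathcal{N}(0,1)$, and combining with the variance asymptotic through Slutsky's theorem gives
$$
\frac{S_n - \E_x[S_n]}{\sqrt{n\sigma_x^2}} \xrightarrow{\mathcal{D}} \mathcal{N}(0,1).
$$

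Finally I would replace $\E_x[S_n]$ by $n\mu_x$ with $\mu_x := (1-\ell(\alpha))(2\E_\mu[f|\mathcal{I}](x)-1)$. Using the explicit formula for $\E_x[X_k]$ one splits
$$
\E_x[S_n] - n\mu_x = \sum_{k=1}^n (\ell(\alpha) - \alpha_k)(2f(T^k x) - 1) + 2(1-\ell(\alpha))\sum_{k=1}^n\bigl(f(T^k x) - \E_\mu[f|\mathcal{I}](x)\bigr),
$$
the first piece vanishing in Cesàro sense from $\alpha_k\to\ell(\alpha)$ and the second from Birkhoff. A last Slutsky step converts the statement into the one claimed. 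The main obstacle is in this last step: showing the remainder is $o(\sqrt{n})$ rather than merely $o(n)$. One natural route is to run the centering step \emph{before} the $A_n^2/n$ limit: divide the Theorem~\ref{CLT1} conclusion by $A_n$, note that $A_n \to \infty$, and invoke the almost-sure convergence from Corollary~\ref{limitp2} to absorb the centering discrepancy into a negligible error. This is the delicate point and the part that justifies the ``$\mu$-almost every $x$'' qualifier in the statement.
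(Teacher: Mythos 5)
Your route is essentially the paper's own: with $p=1/2$ you get $a_n\equiv 1$, the explicit formulas $\E_x[X_k]=(1-\alpha_k)(2f(T^kx)-1)$ and $A_n^2=\sum_{k\le n}\big(1-(1-\alpha_k)^2(2f(T^kx)-1)^2\big)$, an \eqref{estimate2}-type estimate to replace $\alpha_k$ by $\ell(\alpha)$, Birkhoff's theorem to identify $A_n^2/n\to\sigma_x^2$ on a set of full $\mu$-measure, and then Theorem \ref{CLT1} together with a Slutsky argument; this is exactly what the paper does, packaged through the identity \eqref{clt12} and the limit \eqref{nAnn}.

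The one substantive point is the last step, which you flag but do not close: replacing the centering $\E_x[S_n]$ by $n\mu_x$ with $\mu_x:=(1-\ell(\alpha))(2\E_\mu[f|\mathcal{I}](x)-1)$ under a $\sqrt{n}$ normalization requires $\E_x[S_n]-n\mu_x=o(\sqrt{n})$, whereas Corollary \ref{limitp2} (and the Birkhoff theorem behind it) only yields $o(n)$. Your proposed repair --- divide by $A_n$ and invoke the almost sure convergence of Corollary \ref{limitp2} --- does not help, because $A_n\sim\sigma_x\sqrt{n}$, so the discrepancy term is still $(\E_x[S_n]-n\mu_x)/A_n$ and the same $o(\sqrt{n})$ control is needed. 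Indeed, by Proposition \ref{media} with $a_n\equiv1$,
\begin{equation*}
\E_x[S_n]-n\mu_x=\alpha_1(2q-1)+\sum_{k=1}^{n}(\ell(\alpha)-\alpha_k)(2f(T^kx)-1)+2(1-\ell(\alpha))\sum_{k=1}^{n}\big(f(T^kx)-\E_\mu[f|\mathcal{I}](x)\big),
\end{equation*}
so one needs $\sum_{k\le n}|\alpha_k-\ell(\alpha)|=o(\sqrt{n})$ together with $o(\sqrt{n})$ fluctuations of the Birkhoff sums of $f$; neither follows from the stated hypotheses (for many systems the Birkhoff fluctuations are genuinely of order $\sqrt{n}$ or $\sqrt{n\log\log n}$). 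You should know, however, that the paper's own proof is no more detailed at this point: it simply applies \eqref{nAnn}, Corollary \ref{limitp2} and Theorem \ref{CLT1} to \eqref{clt12}. So your write-up reproduces the published argument, and the delicate point you correctly isolated is a gap shared with the paper; closing it requires an additional rate assumption of the above type (as in the paper's circle example, where the Birkhoff error is $O(1)$), not a different overall strategy.
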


\begin{theorem}[Law of Iterated Logarithm] \label{LIL}
Under the conditions of Theorem \ref{CLT1} or Corollary \ref{cor:D},
\begin{align}\label{log}
    \limsup_{n \to \infty}  \dfrac{|S_n - \E_{x}[S_n]|}{a_n A_n\sqrt{\log \log(A_n)}} = \sqrt{2} \quad \mathbb{P}_x-a.s. 
\end{align}
\end{theorem}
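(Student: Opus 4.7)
The approach is to reduce \eqref{log} to the classical law of the iterated logarithm for square-integrable martingales (Stout, 1970). The central object, which already underlies the proof of Theorem~\ref{CLT1}, is the process
\[
M_n := \frac{S_n-\E_x[S_n]}{a_n},
\]
an $(\F_n)$-martingale. A direct computation from the mixture definition of the DERW increments shows that $\E_x[X_n\mid\F_{n-1}]-\E_x[X_n]$ equals a deterministic multiple of $S_{n-1}-\E_x[S_{n-1}]$, and the sequence $a_n$ is precisely the compensator that absorbs this drift. The martingale differences then satisfy
\[
\Delta M_n = \frac{X_n-\E_x[X_n\mid\F_{n-1}]}{a_n},\qquad \E_x\!\left[(\Delta M_n)^2\mid\F_{n-1}\right] = \frac{1-\E_x[X_n\mid\F_{n-1}]^2}{a_n^2},
\]
because $X_n\in\{-1,+1\}$.

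The plan is then to invoke a martingale LIL (e.g.\ Stout's theorem) for $M_n$. Writing the predictable quadratic variation
\[
V_n^2 := \sum_{k=1}^{n} \E_x\!\left[(\Delta M_k)^2\mid\F_{k-1}\right],
\]
the ingredients needed are (i) $V_n^2\to\infty$ and $V_n^2/A_n^2\to 1$ $\p_x$-a.s., so that $V_n$ may be replaced by the deterministic $A_n$; and (ii) a bounded-increment / Lindeberg condition, which is automatic since $|\Delta M_n|\le 2/a_n$ and the quantities $1/a_k^2$ already appear in $A_n^2$. For (i) one estimates
\[
V_n^2-A_n^2 = \sum_{k=1}^{n}\frac{\E_x[X_k]^2-\E_x[X_k\mid\F_{k-1}]^2}{a_k^2} = O\!\Bigg(\sum_{k=1}^{n}\frac{(S_{k-1}-\E_x[S_{k-1}])^2}{(k-1)^2 a_k^2}\Bigg),
\]
and uses Theorem~\ref{SLLN} (which gives $S_{k-1}-\E_x[S_{k-1}]=o(k)$ a.s.) together with the growth of $a_n$ forced by condition~\eqref{transition:property}, and in the critical cases by conditions~\ref{D1} and~\ref{D2}, to conclude that the right-hand side is $o(A_n^2)$ a.s.

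Once $V_n^2/A_n^2\to 1$ a.s.\ is in hand, Stout's martingale LIL delivers
\[
\limsup_{n\to\infty}\frac{|M_n|}{\sqrt{2\,V_n^2\,\log\log V_n^2}} = 1\quad\p_x\text{-a.s.},
\]
and substituting $V_n^2\sim A_n^2$ together with $\log\log V_n^2 = \log\log A_n^2 + o(1) = \log\log A_n + O(1)$, and unpacking the definition of $M_n$, yields exactly~\eqref{log}. The main obstacle is the a.s.\ identification $V_n^2/A_n^2\to 1$ in the boundary regimes covered by Corollary~\ref{cor:D} (where $p=1$, $\ell_{\inf}(\alpha)=0$, or $p\cdot\ell_{\sup}(\alpha)$ is close to $1$): there the $o(k)$ SLLN bound must be balanced against the precise polynomial (or logarithmic) growth of $a_n$, and conditions~\ref{D1} and~\ref{D2} appear to be sized precisely to provide the needed margin. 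Away from these regimes the argument reduces to a routine invocation of the martingale LIL.
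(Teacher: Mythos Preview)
Your overall strategy matches the paper's: both invoke Stout's martingale LIL for $M_n=(S_n-\E_x[S_n])/a_n$, and your ingredient~(i)---the a.s.\ convergence $V_n^2/A_n^2\to 1$---is precisely \eqref{squared:conv:Yn}, which the paper simply carries over from the proof of Theorem~\ref{CLT1}. So that step is already in hand, uniformly across all the regimes of Theorem~\ref{CLT1} and Corollary~\ref{cor:D}; it is not the obstacle you single out at the end.

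The part you call ``automatic'' is where the paper actually does the work. Stout's theorem requires $|\Delta M_n|\le K_n\,A_n/u_n$ with $u_n=\sqrt{2\log\log A_n^2}$ and $K_n\to 0$; from $|\Delta M_n|\le 2/a_n$ this means one must show $u_n/(a_nA_n)\to 0$. Observing that $1/a_n^2$ is one summand in $A_n^2$ gives only $1/a_n^2=o(A_n^2)$, which does not absorb the extra $\log\log$ factor. The paper obtains the needed two-sided control via the crude but uniform bound $a_k\ge(1-g(\alpha,2))/(k-1)$, which yields $A_n^2\lesssim n^3$ (hence $u_n=O(\sqrt{\log\log n})$) and, combined with $B_n^2/A_n^2$ bounded, $a_n^2A_n^2\gtrsim n$; together these force $K_n=O\big(\sqrt{(\log\log n)/n}\,\big)\to 0$. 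This computation is elementary but not a formality, and it makes no use of conditions~\eqref{D1}--\eqref{D2}: those enter only through Lemma~\ref{lm:variance:limit} in establishing \eqref{squared:conv:Yn}.
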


We also obtain an almost sure convergence result for the DERW in the regime where the central limit theorem does not hold.

\begin{theorem}\label{ASC}
Let $(S_n)_{n \geq 0}$ be the DERW with $p>3/4$ and $\ell_{\inf}(\alpha)>\frac{1}{4p-2}$. Then, for all $x \in \X$, one has that
\begin{equation*}
    \frac{S_n-\E_x[S_n]}{a_n}\to M\quad \p_x-a.s.,
\end{equation*}
where $M$ is a non-degenerated zero mean random variable.
\end{theorem}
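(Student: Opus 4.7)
The plan is to realize $M_n := (S_n - \E_x[S_n])/a_n$ as an $L^2$-bounded $(\F_n)$-martingale, from which both the almost sure convergence and the non-degeneracy of the limit $M$ will follow from the $L^2$-martingale convergence theorem.

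I would first verify the martingale identity by a direct computation from the DERW definition. For $n\geq 2$,
\[
\E_x[X_n \mid \F_{n-1}] \;=\; \frac{(2p-1)\,\alpha_n}{n-1}\,S_{n-1} \;+\; (1-\alpha_n)\bigl(2f(T^n x)-1\bigr),
\]
so that $\E_x[S_n\mid \F_{n-1}] = (a_n/a_{n-1})\,S_{n-1} + (1-\alpha_n)(2f(T^n x)-1)$, using $a_n/a_{n-1} = 1 + (2p-1)\alpha_n/(n-1)$. Taking the unconditional expectation gives the same recurrence for $\E_x[S_n]$; subtracting the two yields $\E_x[S_n - \E_x[S_n]\mid\F_{n-1}] = (a_n/a_{n-1})(S_{n-1}-\E_x[S_{n-1}])$, and dividing by $a_n$ gives $\E_x[M_n\mid\F_{n-1}] = M_{n-1}$.

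Next I would establish a uniform $L^2$ bound. The martingale differences $\Delta M_n = (X_n - \E_x[X_n\mid\F_{n-1}])/a_n$ satisfy $\E_x[\Delta M_n^2\mid\F_{n-1}] \leq 1/a_n^2$ because $X_n^2 = 1$. The central estimate is the growth rate of $a_n$: expanding $\log a_n = (2p-1)\sum_{j=1}^{n-1}\alpha_{j+1}/j + O(1)$, the hypothesis $\ell_{\inf}(\alpha) > 1/(4p-2)$ yields a $\delta>0$ with $(2p-1)\alpha_j \geq \tfrac12+\delta$ for all large $j$, so $a_n \geq c\,n^{1/2+\delta}$ eventually, and consequently $\sum_k a_k^{-2} < \infty$. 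Hence $\sup_n \E_x[M_n^2]<\infty$, and the $L^2$-martingale convergence theorem produces an integrable $M$ with $M_n\to M$ both almost surely and in $L^2$ and $\E_x[M]=0$.

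Finally, non-degeneracy follows from orthogonality of the martingale increments: $\Var_x(M) = \sum_{k\geq 1}\E_x[1-\E_x[X_k\mid\F_{k-1}]^2]/a_k^2$, and it suffices to exhibit a single index $k$ for which the summand is strictly positive. The hypothesis $\ell_{\inf}(\alpha)>0$, together with the standing exclusion of the configuration $g(\alpha,2)=1$ with $p=0$, ensures via the mixture structure that $\p_x[X_k = 1\mid \F_{k-1}]\in(0,1)$ on a set of positive probability for some $k$. The main technical point is the growth estimate on $a_n$: because $\ell(\alpha)$ is not assumed to exist, the lower bound on $\sum_j \alpha_j/j$ must be extracted purely from the $\liminf$ hypothesis; once that is in hand, the argument follows the classical martingale template for the superdiffusive elephant random walk.
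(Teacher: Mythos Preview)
Your proposal is correct and follows essentially the same route as the paper: show that $M_n=(S_n-\E_x[S_n])/a_n$ is an $L^2$-bounded martingale via $\sum_k a_k^{-2}<\infty$, invoke the $L^2$-martingale convergence theorem, and read off $\E_x[M]=0$ and $\Var_x(M)=\sum_k\E_x[Y_k^2]>0$ from orthogonality of the increments. The only real difference is how the key summability $\sum_k a_k^{-2}<\infty$ is obtained: the paper packages this as Lemma~\ref{Bnconv} and proves it with Raabe's test applied to $c_n=a_n^{-2}$, whereas you expand $\log a_n$ directly and extract the polynomial lower bound $a_n\gtrsim n^{1/2+\delta}$ from the $\liminf$ hypothesis. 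Both arguments are short and equivalent in strength; your logarithmic expansion is perhaps more transparent about where the threshold $1/(4p-2)$ comes from, while the paper's Raabe argument has the virtue of simultaneously handling the divergent case needed for the CLT.
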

We finish this section with an example of a DERW where the central limit theorem holds.

\begin{example}
Let $\X=\{(x,y)\in\mathbb{R}^2:x^2+y^2\leq1\}$ be endowed with the Lebesgue $\sigma$-algebra $\A$. Denote by $\mu$ the uniform probability measure defined on $\A_{|S^2}$ where $S^2:=\{(x,y)\in\mathbb{R}^2 : x^2+y^2 = 1 \}$. 
Namely, $\mu(I(t))=\frac{t}{2\pi}\wedge1$ for $t\geq0$ where $I(t):=\{(\cos{\beta},\sin{\beta}):\beta\in[0,t)\}$. 

Consider the following system of ordinary differential equations on $\X$

\begin{equation} \label{ode}
\left\{\begin{array}{l}
\frac{dx}{dt}=-y+bx(x^2+y^2-1)\\
\frac{dy}{dt}=x+by(x^2+y^2-1)
\end{array}\right.
\end{equation}

Assume that $b < 0$ and, for $(x,y)\neq(0,0)$, denote by $\phi_t(x,y):=\phi(t,x,y)$ the solutions of \eqref{ode} satisfying $\phi_0=Id$. It is easy to verify, by passing to polar coordinates if necessary, that $\phi_t:\X\to\X$ is $\mu$-invariant for any $t\geq0$.

\begin{figure}[H]
    \centering
    \includegraphics[scale=0.7]{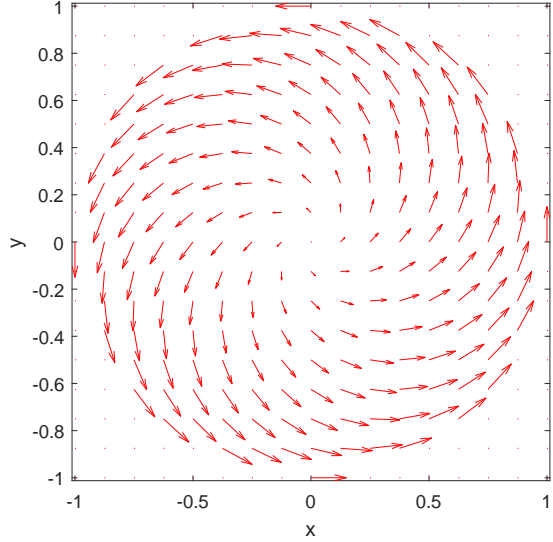}
    \caption{Phase portrait of $\phi_t(x,y)$ for $b=-1$}\label{dynamicmap}
\end{figure}

We observe that for any $m,n \geq 0$, $\phi_n\circ\phi_m=\phi_{n+m}$ (see \cite[p.~175]{hirsch1974differential}). Set $T:=\phi_1$. In polar coordinates we have that
\begin{equation*}
T(r,\theta)=\left\{\begin{array}{ll}
\left(\frac{e^b}{\sqrt{e^{2b}+\frac{1}{r^2}-1}},\theta+1\right),&\text{if }0<r<1\nonumber\\ 
(1,\theta+1),&\text{if }r=1.
\end{array}\right.
\end{equation*}
Let $f:\X\to[0,1]$ be given in polar coordinates by $f(r,\theta)=\cos^2{\theta}$. It follows from Birkhoff's Ergodic Theorem that
\begin{equation}
\E_\mu[f|\mathcal{I}]=\lim_{n\to+\infty}\frac{1}{n}\left(\sum_{k=1}^n\cos^2{(\theta+k)}\right)\mathbbm{1}_{S^2}=\frac{1}{2}\mathbbm{1}_{S^2} \quad \mu-a.s.\nonumber
\end{equation}
where the $\sigma$-algebra of all $T$-invariant sets $\mathcal{I}$ coincides with $\A$. If $g:[0,1]\times\N\to[0,1]$ is given by
\[
    g(\alpha,n)=\frac{\alpha}{2}+(-1)^{n+1}\frac{\alpha}{n+1},
\]
then $\ell(\alpha)=\frac{\alpha}{2}$. Assuming that $p=1/2$ we obtain from Corollary \ref{limitp2} that
\begin{equation}
\lim_{n\to+\infty}\frac{S_n}{n}=\left(1-\frac{\alpha}{2}\right)(\mathbbm{1}_{S^2}-1) \quad \p_{(x,y)}-a.s.\nonumber
\end{equation}
for every $(x,y)\in \X$. A straightforward computation yields
\begin{equation}
\E_\mu[f^2|\mathcal{I}]=\frac{3}{8}\mathbbm{1}_{S^2} \quad \mu-a.s.\nonumber
\end{equation}
Thus we may conclude by means of Corollary \ref{CLTp2} that
    \begin{equation} \label{disc:CLT:example}
        \frac{S_n-n(1-\frac{\alpha}{2})(\mathbbm{1}_{S^2}-1)}{\sqrt{n\big(1+(1-\frac{\alpha}{2})^2(1-\frac{1}{2}\mathbbm{1}_{S^2})\big)}}\xrightarrow{\mathcal{D}}\mathcal{N}(0,1)
    \end{equation}
    for all $(x,y) \in \mathcal{E}$, where $\mathcal{E}$ is as in \eqref{Birkhoff:eq}.
Hence, if $(x,y)\in \mathcal{E}\setminus S^2$, then
    \begin{equation*}
        \frac{S_n+n(1-\frac{\alpha}{2})}{\sqrt{n}}\xrightarrow{\mathcal{D}}\mathcal{N}\left(0,1+\left(1-\frac{\alpha}{2}\right)^2\right),
    \end{equation*}
Moreover, if $(x,y) \in \mathcal{E} \cap S^2$, then
    \begin{equation*}
        \frac{S_n}{\sqrt{n}}\xrightarrow{\mathcal{D}}\mathcal{N}\left(0,1+\frac{1}{2}\left(1-\frac{\alpha}{2}\right)^2\right).
    \end{equation*}
\end{example}
\section{Proofs}\label{s4}
\subsection{The Strong Law of Large Numbers}
The following equation can be obtained by straightforward computation. It gives us an explicit expression for the conditional expectation with respect to the natural filtration of the increments of the Dynamic Elephant Random Walk.
\begin{equation}\label{conditional}
    \E_x[X_{n+1}|\F_n]=\frac{\alpha_{n+1}(2p-1)}{n}S_n+(1-\alpha_{n+1})(2f(T^{n+1}x)-1)
\end{equation}

Before proving the Strong Law of Large Numbers, we state the lemma below.

\begin{lemma}\label{ann}
Assume that the DERW satisfies \eqref{transition:property}. Then $(n/a_n)_{n \geq 1}$ is non-decreasing and
\begin{equation*}
    \lim_{n \rightarrow +\infty} \frac{a_n}{n} = 0.
\end{equation*}
\end{lemma}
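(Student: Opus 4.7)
The plan is to read off both conclusions directly from the defining product $a_n=\prod_{j=1}^{n-1}\bigl(1+(2p-1)\alpha_{j+1}/j\bigr)$.

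For the monotonicity of $(n/a_n)_{n\ge 1}$, I will first check that every factor is strictly positive so that $a_n > 0$: for $j \ge 2$ the factor exceeds $1 - 1/j > 0$, while for $j=1$ it equals $1+(2p-1)\alpha_2 \ge 0$, with equality only in the case $p=0$ and $g(\alpha,2)=1$ that has been explicitly excluded. Once $a_n>0$ is known, the inequality $(n+1)/a_{n+1} \ge n/a_n$ is equivalent, after clearing denominators and using $a_{n+1}/a_n = 1+(2p-1)\alpha_{n+1}/n$, to $(2p-1)\alpha_{n+1} \le 1$, which is immediate since $2p-1 \le 1$ and $\alpha_{n+1} \in [0,1]$.

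For the limit $a_n/n \to 0$, I will split on the sign of $2p-1$. If $p \le 1/2$, every factor in the product is $\le 1$, whence $a_n \le 1$ and the claim is trivial. If $p > 1/2$, I take logarithms and use $\log(1+x) \le x$ to obtain
\[
\log a_n \le \sum_{j=1}^{n-1} \frac{(2p-1)\alpha_{j+1}}{j}.
\]
Condition \eqref{transition:property}, combined with the elementary inequality $2p-1 \le p$ (valid for $p \le 1$), yields $(2p-1)\ell_{\sup}(\alpha) < 1$. Fix $\varepsilon > 0$ small enough that $\gamma := (2p-1)(\ell_{\sup}(\alpha)+\varepsilon) < 1$; then $\alpha_{j+1} \le \ell_{\sup}(\alpha)+\varepsilon$ holds for all sufficiently large $j$. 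Inserting this bound together with $\sum_{j=1}^{n-1} 1/j = \log n + O(1)$ gives $\log a_n \le \gamma \log n + O(1)$, so that $a_n = O(n^{\gamma})$ and $a_n/n = O(n^{\gamma - 1}) \to 0$.

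The only delicate point is the conversion of the hypothesis $p\,\ell_{\sup}(\alpha) < 1$ into the strictly stronger-looking statement $(2p-1)\ell_{\sup}(\alpha) < 1$ that is needed to keep $\gamma < 1$; the bound $2p-1 \le p$ does this in one line. Everything else is a routine combination of a telescoping-ratio argument for the monotonicity and the $\log(1+x) \le x$ estimate for the asymptotics.
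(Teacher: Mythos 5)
Your proof is correct, and it diverges from the paper's only in the second half. The monotonicity argument is essentially the paper's: the paper computes the difference $\frac{n+1}{a_{n+1}}-\frac{n}{a_n}$ explicitly and observes it is nonnegative because $(2p-1)\alpha_{k+1}\le 1$, which is exactly the inequality your ratio argument isolates (your preliminary check that $a_n>0$, using the excluded case $p=0$, $g(\alpha,2)=1$, makes explicit a point the paper leaves tacit). For the limit, however, the paper takes a different route: it rewrites $\frac{a_n}{n}=\prod_{k=1}^{n-1}(1-b_k)$ with $b_k=\frac{1-(2p-1)\alpha_{k+1}}{k+1}\in[0,1)$ and invokes the classical criterion that such a product tends to $0$ if and only if $\sum_k b_k=\infty$, the divergence being extracted from condition \eqref{transition:property} through the same elementary bound $2p-1\le p$ that you spell out. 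Your argument instead splits on the sign of $2p-1$ and, when $p>1/2$, estimates $\log a_n$ against a partial harmonic sum to obtain $a_n=O(n^{\gamma})$ with $\gamma<1$. This is slightly longer but buys a quantitative conclusion, namely the polynomial rate $a_n/n=O(n^{\gamma-1})$, which the paper's product-divergence criterion does not deliver directly; the paper's version is shorter and avoids any case split.
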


\begin{proof}
We begin by observing that
\begin{equation}\label{an2}
    \frac{a_n}{n} = \prod_{k=1}^{n-1}\frac{k+(2p-1)\alpha_{k+1})}{k+1}
    = \prod_{k=1}^{n-1}\left(1-\frac{1-(2p-1)\alpha_{k+1}}{k+1}\right).
\end{equation}

Let $b_n:=\frac{1-(2p-1)g(\alpha,n+1)}{n+1} \in [0,1)$. Then, $\lim_{n \rightarrow +\infty} \frac{a_n}{n} = 0$ if, and only if, $\sum_{n} b_n=\infty$. Since the DERW satisfies condition \eqref{transition:property}, we have that $\sum_{n} b_n=\infty$. This finishes the proof of the second statement.

Since $(2p-1)\alpha_k\leq1$, it follows from equation \eqref{an2} that
\begin{eqnarray*}
    \frac{n+1}{a_{n+1}}-\frac{n}{a_n}
    &=&\frac{n!(1-(2p-1)\alpha_{n+1})}{\prod_{k=1}^{n}(k+(2p-1)\alpha_{k+1})} \geq 0,
    \end{eqnarray*}
which yields the conclusion of the first statement.    
\end{proof}

Let $M_n$ be the random variable defined by $M_n:=\frac{S_n-\E_x[S_n]}{a_n}$ for $n\geq1$ when $a_n>0$. In subsection \ref{aux.results} we show that the sequence $(M_n)_n$ is a martingale with respect to the natural filtration.

Let $(Y_n)_n$ be the martingale difference sequence associated to $(M_n)_n$, \emph{i.e.}, $Y_1:=M_1$ and $Y_n:=M_n-M_{n-1}$ for $n\geq2$. Since $Y_n = M_n - \E_x[M_n|\F_{n-1}] ~~\p_x-a.s.$, it follows from the linearity of the conditional expectation that
\begin{equation} \label{y_n}
    Y_n = \frac{X_n - \E_x[X_n|\F_{n-1}]}{a_n} \quad \p_x-a.s.
\end{equation}
for all $n \geq 1$. Since we have by \eqref{conditional} that $|\E_x[X_n|\F_{n-1}]| \leq 1 ~~\p_x-a.s.$, one has, for all $n \geq 1$, that
\begin{equation} \label{bounds:y_n}
    |Y_n| \leq \frac{2}{a_n} \quad \p_x-a.s.
\end{equation}
for every $x \in \X$.
    
We now turn to the proof of Theorem \ref{SLLN} and Corollary \ref{limitp2}.

\begin{proof}[Proof of Theorem \ref{SLLN}]
 Set $W_n:=\frac{a_n}{n}Y_n$. By \eqref{y_n}, we verify that $W_n$ is a $\F_n$-measurable random variable such that  $\E_x[W_{n}|\F_{n-1}]=\frac{a_{n}}{n}\E_x[Y_{n}|\F_{n-1}]=0$ $\p_x-a.s.$ Therefore, $(W_n)_n$ is a sequence of bounded martingale differences. 
 
Since $\E_x[W_{n}^2|\F_{n-1}]\leq\frac{4}{n^2} ~~\p_x-a.s.$ by \eqref{bounds:y_n}, we have that, for all $x \in \X$,
\[
    \sum\limits_{j=2}^\infty\E_x[W_j^2|\F_{j-1}]\leq 4\sum\limits_{j=2}^\infty\frac{1}{j^2}<\infty \quad \p_x-a.s.
\]
Thence, it follows from Theorem 2.7 of \cite{hall1980} that $\sum\limits_{j=1}^n\frac{a_j}{j}Y_j=\sum\limits_{j=1}^nW_j$ converges $\p_x$-almost surely as $n \to +\infty$.
    
We apply Lemma \ref{ann} and Kronecker's lemma obtaining that
    \begin{equation*}
        \lim_{n\to+\infty}\left|\frac{S_n - \E_x[S_n]}{n}\right|=\lim_{n\to+\infty}\left|\frac{a_nM_n}{n}\right|=\lim_{n\to+\infty}\left|\frac{\sum_{j=1}^nY_j}{n/a_n}\right|{=}0 \quad\p_x-a.s.
    \end{equation*}
    for all $x \in \X$
\end{proof}
    
To prove Corollary \ref{limitp2} we will use the following result on the expectation of the random walk.

\begin{proposition}\label{media}
    Let $(S_n)_{n\geq 1}$ be the DERW. Then
	\begin{equation}\label{estimatesn}
		\E_x[S_n]=a_n\left(\alpha_1(2q-1)+\sum_{k=1}^{n}\frac{(1-\alpha_k)(2f(T^kx)-1)}{a_k}\right)
	\end{equation}
\end{proposition}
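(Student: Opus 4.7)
The plan is to derive a linear recurrence for $\E_x[S_n]$ from equation \eqref{conditional}, recognize that the recursion coefficient is precisely the ratio $a_{n+1}/a_n$, and then solve by telescoping.

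First I would take unconditional expectations in \eqref{conditional} and combine with $\E_x[S_{n+1}] = \E_x[S_n] + \E_x[X_{n+1}]$ to obtain
\begin{equation*}
    \E_x[S_{n+1}] = \left(1 + \frac{(2p-1)\alpha_{n+1}}{n}\right)\E_x[S_n] + (1-\alpha_{n+1})(2f(T^{n+1}x)-1).
\end{equation*}
The key observation is that, by the definition of $a_n$, one has $1 + \frac{(2p-1)\alpha_{n+1}}{n} = a_{n+1}/a_n$. Dividing the recurrence through by $a_{n+1}$ therefore yields the telescoping identity
\begin{equation*}
    \frac{\E_x[S_{n+1}]}{a_{n+1}} - \frac{\E_x[S_n]}{a_n} = \frac{(1-\alpha_{n+1})(2f(T^{n+1}x)-1)}{a_{n+1}}.
\end{equation*}

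Next I would sum this identity from $n=1$ up to $n=N-1$ and reindex, obtaining
\begin{equation*}
    \frac{\E_x[S_N]}{a_N} = \frac{\E_x[S_1]}{a_1} + \sum_{k=2}^{N}\frac{(1-\alpha_k)(2f(T^kx)-1)}{a_k}.
\end{equation*}
Since $a_1 = 1$ (empty product) and, directly from the definition of the increments of the DERW, $\E_x[X_1] = \alpha_1(2q-1) + (1-\alpha_1)(2f(Tx)-1)$, the initial term absorbs the $k=1$ summand and produces
\begin{equation*}
    \frac{\E_x[S_N]}{a_N} = \alpha_1(2q-1) + \sum_{k=1}^{N}\frac{(1-\alpha_k)(2f(T^kx)-1)}{a_k},
\end{equation*}
which is exactly \eqref{estimatesn} after multiplying by $a_N$.

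There is no real obstacle here: the whole computation is routine once one notices the algebraic coincidence $1+(2p-1)\alpha_{n+1}/n = a_{n+1}/a_n$ that converts the recurrence into a telescoping sum. An equivalent presentation would be induction on $n$, where the base case $n=1$ reduces to the computation of $\E_x[X_1]$ and the inductive step simply plugs $\E_x[S_n]$ into the recurrence above; I would choose whichever of the two formats (telescoping or induction) reads more cleanly, but the telescoping version makes the role of the weights $1/a_k$ more transparent.
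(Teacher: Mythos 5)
Your proof is correct and follows essentially the same route as the paper: the paper also takes expectations in \eqref{conditional}, uses the identity $1+\tfrac{(2p-1)\alpha_{n+1}}{n}=a_{n+1}/a_n$ to write $\E_x[S_{n+1}]=a_{n+1}\bigl(\E_x[S_n]/a_n+(1-\alpha_{n+1})(2f(T^{n+1}x)-1)/a_{n+1}\bigr)$, and concludes by induction, which is just the inductive phrasing of your telescoping argument.
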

\begin{proof}
Note that $S_1=X_1$ satisfies \eqref{estimatesn} for $n=1$. Suppose that \eqref{estimatesn} holds for a given $n \in \N$. Then
\begin{align*}
\E_x[S_{n+1}] &= \E_x[S_{n}]+\E_x[\E_x[X_{n+1}|\F_{n}]]\\
&=\E_x[S_{n}]\frac{a_{n+1}}{a_{n}}+(1-\alpha_{n+1})(2f(T^{n+1}x)-1)\\
&=a_{n+1}\left(\frac{\E_x[S_{n}]}{a_{n}}+\frac{(1-\alpha_{n+1})(2f(T^{n+1}x)-1)}{a_{n+1}}\right).
\end{align*}
The conclusion follows by induction on $n$. 
\end{proof}
    
\begin{proof}[{Proof of Corollary \ref{limitp2}}]

Since $p=\frac{1}{2}$, $a_n=1$ for all $n\geq1$. Then it can be computed from \eqref{estimatesn} that
\begin{equation*}
\frac{\E_x[S_n]}{n}=\frac{1}{n}\left(\alpha_1(2q-1)+\sum_{k=1}^n(1-\alpha_k)(2f(T^{k}x)-1)\right).
\end{equation*}

Since $\ell(\alpha) = \lim_{n\to+\infty}\alpha_n$ exists, we have that

\begin{align}
\frac{1}{n}\Bigg| \sum_{k=1}^n(1-\alpha_k)(2f(T^{k}x)&-1)-\sum_{k=1}^n(1-\ell(\alpha))(2f(T^{k}x)-1) \Bigg|\nonumber\\ 
&= \frac{1}{n}\left|\sum_{k=1}^n(2f(T^kx)-1)(\ell(\alpha)-\alpha_k)\right| \nonumber \\
&\leq \frac{1}{n}\sum_{k=1}^n\left|\ell(\alpha)-\alpha_k\right|\to0 \quad \text{(as} ~ n \to +\infty\text{)} \label{estimate2}.
\end{align}

It follows from the Birkhoff Ergodic Theorem and the Strong Law of Large Numbers (Theorem \ref{SLLN}) that
\begin{align*}
\lim_{n\to+\infty}\frac{\E_x[S_n]}{n} &= \lim_{n\to+\infty}\frac{1}{n}\left(\alpha_1(2q-1)+\sum_{k=1}^n(1-\ell(\alpha))(2f(T^{k}x)-1)\right)\nonumber\\
&= (1-\ell(\alpha)) \lim_{n\to+\infty} \frac{1}{n}\sum_{k=1}^n(2f(T^{k}x)-1) \\
&= (1-\ell(\alpha))\big(2\E_\mu[f|\mathcal{I}](x)-1\big) \quad \p_x-a.s.
\end{align*}
for $\mu$-almost every $x \in \X$.
\end{proof}


\subsection{The Central Limit Theorem}
Define the sequence of positive terms $(B_n)_{n\geq 1}$ given by $B_n^2:=\sum_{k=1}^n\frac{1}{a_k^2}$.

\begin{lemma}\label{Bnconv}
 If $p \leq 3/4$ or $\ell_{\sup}(\alpha) < \frac{1}{4p-2}$, then $B_n^2 \nearrow +\infty $. Moreover, if $p > 3/4$ and $\ell_{\inf}(\alpha) > \frac{1}{4p-2}$, then $\lim\limits_{n \to \infty}|B_n| < + \infty$.
\end{lemma}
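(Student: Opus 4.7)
The plan is to reduce everything to an asymptotic comparison $a_n \asymp n^{\gamma}$ for an appropriate exponent $\gamma = \gamma(p,\alpha)$, so that $\sum 1/a_n^2$ can be compared to $\sum 1/n^{2\gamma}$ via the standard $p$-series test. Concretely, I would pass to logarithms, writing
\[
\log a_n \;=\; \sum_{j=1}^{n-1} \log\!\left(1+\frac{(2p-1)\alpha_{j+1}}{j}\right),
\]
and control this sum using the two-sided inequality $x - x^2 \le \log(1+x) \le x$ valid for $x > -1/2$ (say). Since $|(2p-1)\alpha_{j+1}/j|\le 1/j$, the quadratic error $\sum (2p-1)^2 \alpha_{j+1}^2/j^2$ is summable and contributes only an $O(1)$ term; what survives is $(2p-1)\sum_{j=1}^{n-1}\alpha_{j+1}/j$. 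The value of this sum is then controlled by the partial $\limsup$ and $\liminf$ of $\alpha_n$ through a Cesàro-type estimate: for every $\varepsilon>0$ and large $N$, $(\ell_{\inf}(\alpha)-\varepsilon)\log n - C \le \sum_{j=1}^{n-1}\alpha_{j+1}/j \le (\ell_{\sup}(\alpha)+\varepsilon)\log n + C$.

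With this in hand I would split into cases. If $p \le 1/2$, each factor defining $a_n$ is at most $1$ (here the excluded degenerate case $p=0$, $\alpha_2=1$ is needed to ensure $a_n>0$), so $1/a_n^2 \ge 1$ and $B_n^2 \ge n \to \infty$. If $1/2 < p \le 3/4$, the upper Taylor bound gives $a_n \le C n^{2p-1}$ with $2(2p-1)\le 1$, hence $\sum 1/a_n^2 \ge c\sum 1/n^{2(2p-1)} = \infty$. For $p > 3/4$ with $\ell_{\sup}(\alpha) < 1/(4p-2)$, I choose $\varepsilon$ so small that $2(2p-1)(\ell_{\sup}(\alpha)+\varepsilon)<1$; then the upper bound produces $a_n \le C n^{(2p-1)(\ell_{\sup}(\alpha)+\varepsilon)}$, and the $p$-series test again yields divergence. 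For the convergence regime, $p>3/4$ with $\ell_{\inf}(\alpha) > 1/(4p-2)$, I pick $\varepsilon$ so that $2(2p-1)(\ell_{\inf}(\alpha)-\varepsilon)>1$; the lower Taylor bound combined with the Cesàro estimate gives $a_n \ge c\, n^{(2p-1)(\ell_{\inf}(\alpha)-\varepsilon)}$, and hence $\sum 1/a_n^2 \le C\sum 1/n^{1+\delta} < \infty$ for some $\delta>0$. The monotonicity $B_n^2 \nearrow$ is immediate from the definition, so only the limit behavior needs to be argued.

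The main obstacle I anticipate is purely bookkeeping rather than conceptual: making sure the elementary inequality $(\ell_{\inf}(\alpha)-\varepsilon)\log n + O(1) \le \sum_{j=1}^{n-1}\alpha_{j+1}/j \le (\ell_{\sup}(\alpha)+\varepsilon)\log n + O(1)$ is derived cleanly (splitting at some large $N$ and controlling the head by $\log N$), and handling the boundary case $p=1/2$ where $2p-1=0$ and all terms collapse to $a_n \equiv 1$, giving $B_n^2 = n$ trivially. No subtle measure-theoretic or dynamical input is needed here; the dynamical system and the function $f$ enter only through $\E_x[X_k]^2$ in the full sequence $A_n$, but for $B_n$ they are invisible.
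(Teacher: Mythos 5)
Your proposal is correct, and it reaches both halves of the lemma, but by a route that differs from the paper's in the regime $p>3/4$. For the divergence statement with $p\le 3/4$ you and the paper do essentially the same thing: bound each factor of $a_n$ to get $a_n\lesssim n^{1/2}$ (the paper uses $a_n\le\prod_{j}(1+\tfrac{1/2}{j})\sim n^{1/2}/\Gamma(3/2)$, which already covers your separate $p\le 1/2$ subcase), and then compare with the harmonic series. The real difference is in the remaining cases: the paper disposes of both the divergence case $p>3/4$, $\ell_{\sup}(\alpha)<\tfrac{1}{4p-2}$ and the convergence case $\ell_{\inf}(\alpha)>\tfrac{1}{4p-2}$ in a few lines via Raabe's criterion applied to $c_n=1/a_n^2$, since $n\bigl(1-c_{n+1}/c_n\bigr)\to$-type quantities have $\limsup$ and $\liminf$ equal to $(4p-2)\ell_{\sup}(\alpha)$ and $(4p-2)\ell_{\inf}(\alpha)$ respectively; you instead take logarithms, use $x-x^2\le\log(1+x)\le x$ (only needed for $x\ge 0$ here, since in all cases where you invoke it $p>1/2$, so the boundary issues you worry about at $j=1$ never actually arise), absorb the summable quadratic error, and convert $\limsup/\liminf$ information on $\alpha_n$ into the Cesàro-type bounds $(\ell_{\inf}(\alpha)-\varepsilon)\log n-C\le\sum_{j<n}\alpha_{j+1}/j\le(\ell_{\sup}(\alpha)+\varepsilon)\log n+C$, ending with the $p$-series test. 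Your argument is more elementary (no Raabe) and yields the extra quantitative information $a_n=n^{(2p-1)(\ell(\alpha)\pm\varepsilon)+o(1)}$ up to constants, at the price of more bookkeeping; the paper's ratio-test argument is shorter and sidesteps the head/tail splitting entirely. The monotonicity of $B_n^2$ is indeed immediate in both treatments, so your proof is complete as sketched.
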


\begin{proof}
    Let $p \leq 3/4$. Then
    \[
        a_n \leq \prod_{j=1}^{n-1}\left( 1 + \frac{1/2}{j}\right) \sim \frac{n^{1/2}}{\Gamma(3
        /2)},
    \]
    which implies that $B_n \nearrow +\infty$. 
    
    Consider now $p > 3/4$ and let $c_n:=\frac{1}{a_n^2}$. Then it follows from Raabe criterion \cite[p.~285]{knopp1954} that $B_n^2\nearrow +\infty$ when $\limsup\limits_{n \to \infty} n \left(1- \frac{c_{n+1}}{c_n}\right)  < 1$, which is satisfied with
    \[
        (4p-2) \ell_{\sup}(\alpha) < 1.
    \]
    In the same fashion, we verify that $\lim\limits_{n\to+\infty}B_n^2 < +\infty$ if $\liminf\limits_{n \to \infty} n \left(1- \frac{c_{n+1}}{c_n}\right)  > 1$. Note that this is satisfied when
    \[
        (4p-2)\ell_{\inf}(\alpha) > 1.
    \]
    \end{proof}
    
    \begin{proof}[{Proof of Theorem \ref{CLT1}}]
    We first examine the asymptotic behaviour of $A_n$. It follows from Lemma \ref{lm:variance:limit} that there exists $n_0 \in \N$ and $c \in (0,1)$ such that, if $n > n_0$, then $c(B_n^2-B_{n_0}^2) \leq A_n^2-A_{n_0}^2$. We apply Lemma \ref{Bnconv} to verify that $A_n \nearrow +\infty$.
    
    Note that we are under conditions of Theorem \ref{SLLN}. Therefore, $\frac{S_n}{n}= \frac{\E_x[S_n]}{n} + \text{o}(1) ~~\p_x-a.s.$ for all $x \in \X$. Then it follows from \eqref{conditional} that $\E_x[X_n|\F_{n-1}]= \E_x[X_n] + \text{o}(1) ~~\p_x-a.s.$ We apply it to \eqref{y_n} obtaining
    \begin{align*}
        \E_x[Y_n^2|\F_{k-1}] & = \frac{1}{a_n^2}\left(1 - 2 \E_x[X_n|\F_{n-1}]\E_x[X_n] +\E_x[X_n]^2 + o(1)\right)
        \\&= \frac{1}{a_n^2}\left(1 - \E_x[X_n]^2 + o(1)\right) \quad \p_x-a.s.
    \end{align*}
    Thus,
    \begin{equation} \label{squared:conv:Yn}
        \frac{\sum_{k=1}^n\E_x[Y_k^2|\F_{k-1}]}{A_n^2} = \frac{A_n^2}{A_n^2} + \frac{B_n^2}{A_n^2}\text{o}(1) =1 + \text{o}(1) \quad \p_x-a.s.,
    \end{equation}
    since $\limsup\limits_{n \to +\infty}|B_n^2/A_n^2| \leq 1/c$.
    
    We may arrive to the desired conclusion applying Corollary 3.1 of \cite{hall1980}. To this end, it suffices to verify that the conditional Lindeberg condition holds. Let   $S_{n,i}$ be the martingale array with martingale difference sequence $X_{n,i}=\frac{Y_{i}}{A_n}$ and $\F_{n,i} := \F_i$.
    
    Consider $p\geq1/2$. Then $a_n\geq1$ and it follows from \eqref{bounds:y_n} that $|X_{n,i}| \leq \frac{2}{A_n} ~\p_x-a.s.$ Recall that, under the given conditions, $A_n \nearrow +\infty$. Hence, for every fixed $\varepsilon>0$, $\p_x\big[|X_{n,i}|>\varepsilon\big]=0$ for sufficiently large $n$.
    
    Let now $p<1/2$. Then $a_j^{-1}\leq a_n^{-1}$ for all $j\leq n$ and it is immediate to see that $|X_{n,i}| \leq \frac{2}{a_nA_n} ~\p_x-a.s.$ We can easily verify that $\lim\limits_{n\to+\infty}a_nA_n=+\infty$ (see proof of Thm. \ref{LIL} for details). Therefore, for all $\varepsilon >0$, one has $\p_x\big[|X_{n,i}|>\varepsilon\big]=0$ for sufficiently large $n$.
    
    Now, fix $\varepsilon>0$. Then
    \begin{equation*}
        \sum_{i=1}^{n}\E_x[X_{n,i}^2\mathbbm{1}_{\{|X_{n,i}|>\varepsilon\}}|\F_{n,i-1}]\rightarrow 0 \quad \p_x-a.s.,
    \end{equation*}
    which completes the proof.
    \end{proof}
    
    \begin{proof}[{Proof of the Corollary \ref{CLTp2}}]
    First, observe that since $p=1/2$, we have that $a_n=1$ for all $n\geq1$. We also may conclude that $\E_x[X_k|\F_{k-1}]=\E_x[X_k] ~~\p_x-a.s$ and $A_n^2=\sum_{k=1}^n(1-\E_x[X_k]^2)=\sum_{k=1}^n(1-(1-\alpha_k)^2(2f(T^kx)-1)^2)$. Consequently,
    \begin{equation}
    \frac{M_n}{A_n}=\frac{\frac{S_n}{\sqrt{n}}-\sqrt{n}\frac{\E_x[S_n]}{n}}{\sqrt{\frac{1}{n}\sum_{k=1}^n(1-(1-g(\alpha,k))^2(2f(T^kx)-1)^2)}}.\label{clt12}
    \end{equation}
    Now, similarly to \eqref{estimate2}, we get
    \begin{equation*}
        \frac{1}{n}\sum_{k=1}^n(1-(1-\alpha_k)^2(2f(T^kx)-1)^2)\sim\frac{1}{n}\sum_{k=1}^n(1-(1-\ell(\alpha))^2(2f(T^kx)-1)^2).
    \end{equation*}
    Note that
    \begin{equation*}
    \begin{split}
        \frac{1}{n}\sum_{k=1}^n(1-(1-\ell(\alpha))^2&(2f(T^kx)-1)^2)\\
        &=1-(1-\ell(\alpha))^2\left(\frac{1}{n}\left[\sum_{k=1}^n4(f(T^kx))^2-4f(T^kx)\right]+1\right).
    \end{split}
    \end{equation*}
We can apply Birkhoff Ergodic Theorem in the last equation to conclude that
    \begin{equation}1-(1-\ell(\alpha))^2(4\E_\mu[f^2|\mathcal{I}]-4\E_\mu[f|\mathcal{I}]+1)\quad\mu-a.s.\label{nAnn}
    \end{equation}
In particular, $\lim\limits_{n\to+\infty}\frac{1}{\sqrt{n}}A_n\neq0$.
    
    The desired conclusion follows applying \eqref{nAnn}, Corollary \ref{limitp2} and Theorem \ref{CLT1} to \eqref{clt12}.
    \end{proof}

\subsection{The Law of Iterated Logarithm}
    \begin{proof}[Proof of Theorem \ref{LIL}]
        The law of iterated logarithm for the DERW follows from a application of Theorems 1 and 2 of \cite{stout1970}. We have already shown \eqref{squared:conv:Yn}.
        
        Define $u_n = \sqrt{2 \log \log A_n^2}$ and $K_n= \frac{2 u_n}{a_n A_n}$. 
        Let us write the inequality \eqref{bounds:y_n} in the following way
        \begin{align*}
            |Y_n| \leq \frac{2}{a_n} = K_n\frac{A_n}{u_n} \quad \mathbb{P}_x-a.s.
        \end{align*}
        It is clear that $K_n$ is $\mathcal{F}_{n-1}$ measurable. In order to get \eqref{log} we need to show that $K_n \to 0 $ as $n \to \infty$.
        
        Recall that $g(\alpha, 2) < 1$. Observe that 
        \begin{align*}
            a_k \geq (1 - g(\alpha, 2))   \prod_{i=2}^{k-1} \left(1 - \frac{1}{i} \right)
            = (1 - g(\alpha, 2)) \frac{1}{k-1},
        \end{align*}
        which implies that 
        \begin{align}\label{An}
            A_n^2 \le \sum_{k = 1}^{n}\frac{1}{a_k^2} \le 
            \frac{1}{\big(1 - g(\alpha, 2)\big)^2} \sum_{k = 1}^{n} (k-1)^2 \le \frac{1}{\big(1 - g(\alpha, 2)\big)^2} n^3.    
        \end{align}
        In other words, $u_n \le \sqrt{2 \log \log \frac{n^3}{1 - g(\alpha, 2)}}$.
        
        Let us now take care of $a_n A_n$. Since there exists $c' >0$ such that $B_n^2/A_n^2$ is bounded by $1/c'$, we get 
        \begin{align*}
            a_n^2A_n^2 & \ge \frac{1}{c'} a_n^2 B_n^2 = \frac{1}{c'} a_n^2 \sum_{j=1}^n \frac{1}{a^2_j} 
            = \frac{1}{c'} \left( 1 + \sum_{j=1}^{n-1} \frac{a_n^2}{a^2_j} \right) \\
            &
            = \frac{1}{c'} \left( 1 + \sum_{j=1}^{n-1} \prod_{i=j}^{n-1} \left(1+\frac{(2p -1)g(\alpha, i+1)}{i}\right)^2 \right),
        \end{align*}
        which yields 
        \begin{align*}
            \prod_{i=j}^{n-1} \left(1+\frac{(2p -1)g(\alpha, i+1)}{i}\right) \ge \prod_{i=j}^{n-1} \left(1 - \frac{1}{i}\right)
            = \frac{j-1}{n-1}.
        \end{align*}
        Thence, 
        \begin{align}\label{anAn}
            a_n^2 A_n^2 \ge \frac{1}{c'} \left( 1 + \sum_{j=1}^{n-1} \left(\frac{j-1}{n-1}\right)^2 \right) \ge  \frac{n}{6 \, c'}.
        \end{align}
        Combining \eqref{anAn} and \eqref{An} we are able to show that 
        \begin{align*}
            K_n = \frac{2 u_n}{a_n A_n} \le 4\sqrt{\frac{3 \, c' \log \log \left( \frac{n^3}{(1 - g(\alpha, 2))^2}\right)}{ n }} \to 0  \quad \mbox{ as } \, n \to \infty,
        \end{align*}
        and the claim \eqref{log} follows.
    \end{proof}

    \begin{proof}[{Proof of Theorem \ref{ASC}}]
    We first observe that $\lim\limits_{n \to +\infty} B_n <+\infty$ by Lemma \ref{Bnconv}.
    
    It follows from \eqref{bounds:y_n} that $\sum_{k=1}^n\E_x[Y_k^2]\leq4B_n^2 ~~\p_x-a.s.$. Then by Theorem 12.1 of \cite{williams1991}, for all $x \in \X$,
    \[
    M_n=\sum_{k=1}^nY_k=\frac{S_n-\E_x[S_n]}{a_n}\to M \quad\p_x-a.s ~~ \text{and in} ~~ \mathcal{L}^2.
    \]
    Since $\E_x[M_n]=0$ for all $n$, $|\E_x[M]|=|\E_x[M-M_n]|\leq\E_x[|M-M_n|]$. Then
    \begin{equation*}
    |\E_x[M]|\leq\E_x[|M-M_n|^2]^{1/2}\to0\quad\text{as }n\to+\infty.
    \end{equation*}
    
    Furthermore, since $(Y_n)_{n\geq1}$ is a bounded martingale difference sequence in $\mathcal{L}^2$ and it converges almost surely,
    \begin{equation*}
    \Var_x[M]=\lim_{n\to+\infty}\Var_x[M_n]=\sum_{k=1}^\infty\E_x[Y_k^2]>0.
    \end{equation*}
    This leads us to conclude that $M$ is a non-degenerated zero mean random variable.
    \end{proof}


\subsection{Auxiliary Results} \label{aux.results}
\begin{proposition}
The sequence of random variables $(M_n)_{n\geq1}$ defines a zero mean martingale.
\end{proposition}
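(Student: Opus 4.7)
The plan is to verify the three defining properties of a martingale directly from the definition $M_n = (S_n - \E_x[S_n])/a_n$, leveraging equation \eqref{conditional} and the recursive identity for $\E_x[S_n]$ established in Proposition \ref{media}. The zero mean and adaptedness assertions are essentially trivial: $\E_x[M_n] = 0$ by linearity, and $M_n$ is $\F_n$-measurable because $S_n = \sum_{i=1}^n X_i$ is a sum of $\F_n$-measurable random variables while $a_n$ and $\E_x[S_n]$ are deterministic. Integrability follows from the crude bound $|S_n| \leq n$ (since $|X_i|=1$), which gives $|M_n| \leq 2n/a_n < \infty$.

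The substantive step is checking $\E_x[M_{n+1}\mid \F_n] = M_n$. I would first combine $S_{n+1} = S_n + X_{n+1}$ with \eqref{conditional} to write
\begin{equation*}
    \E_x[S_{n+1}\mid \F_n] = S_n + \frac{\alpha_{n+1}(2p-1)}{n} S_n + (1-\alpha_{n+1})(2f(T^{n+1}x)-1).
\end{equation*}
The crucial observation is that the first two terms assemble into $S_n \cdot a_{n+1}/a_n$, since by definition
\begin{equation*}
    \frac{a_{n+1}}{a_n} = 1 + \frac{(2p-1)\alpha_{n+1}}{n}.
\end{equation*}
Taking unconditional expectation of the same identity (or equivalently invoking the inductive step in the proof of Proposition \ref{media}) yields
\begin{equation*}
    \E_x[S_{n+1}] = \E_x[S_n]\frac{a_{n+1}}{a_n} + (1-\alpha_{n+1})(2f(T^{n+1}x)-1).
\end{equation*}

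Subtracting these two displays cancels the deterministic $f$-term, leaving
\begin{equation*}
    \E_x[S_{n+1}\mid \F_n] - \E_x[S_{n+1}] = \frac{a_{n+1}}{a_n}\bigl(S_n - \E_x[S_n]\bigr).
\end{equation*}
Dividing by $a_{n+1}$ gives exactly $M_n$. I do not anticipate any genuine obstacle here—the whole point of the normalizing constants $a_n$ is precisely to absorb the autoregressive coefficient appearing in \eqref{conditional}, and the argument is a short algebraic verification once that observation is made explicit.
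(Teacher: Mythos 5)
Your proof is correct and follows essentially the same route as the paper's: both use equation \eqref{conditional} to compute the conditional increment, subtract its unconditional expectation so the deterministic $f$-term cancels, and absorb the factor $1+\frac{(2p-1)\alpha_{n+1}}{n}=\frac{a_{n+1}}{a_n}$ into the normalization. Your explicit checks of adaptedness and integrability are fine additions but do not change the argument.
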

\begin{proof}
The zero mean property being straightforward, we only prove that $\E_x[M_{n+1} \\ |\F_n]= M_n$. Indeed, it follows from equation \eqref{conditional} that
\begin{eqnarray*}
\E_x[M_{n+1}|\F_n]&=&\frac{S_n-\E_x[S_n]}{a_{n+1}}+\frac{\E_x[X_{n+1}|\F_n]-\E_x[X_{n+1}]}{a_{n+1}}\\
    &=&\frac{1}{a_{n+1}}\left(S_n\left(1+\frac{\alpha_n(2p-1)}{n}\right)-\left(1+\frac{\alpha_n(2p-1)}{n}\right)\E_x[S_n]\right)\\
    &=&\frac{S_n-\E_x[S_n]}{a_n} \quad \p_x-a.s.
\end{eqnarray*}
which finishes the proof.
\end{proof}

\begin{lemma} \label{lm:variance:limit}
    Let the DERW be defined with $p<1$ and $\ell_{\inf}(\alpha)>0$. Then
    \begin{equation} \label{var:lim:less:than:one}
        \liminf_{n\to+\infty}\Var_x[X_n] >0.
    \end{equation}
    Futhermore, if the DERW jointly satisfies \eqref{transition:property}, \eqref{D1}, and \eqref{D2}, then \eqref{var:lim:less:than:one} still holds.
\end{lemma}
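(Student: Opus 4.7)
The plan is to use the fact that $X_n \in \{-1,+1\}$, so $\Var_x[X_n] = 1 - (\E_x[X_n])^2$; showing $\liminf \Var_x[X_n] > 0$ thus amounts to showing $\limsup_n |\E_x[X_n]| < 1$, i.e., that both $\liminf_n(1 - \E_x[X_n])$ and $\liminf_n(1 + \E_x[X_n])$ are strictly positive. Writing $m_n := \E_x[X_n]$ and $L_n := \E_x[S_n]/n$ (so $|L_n| \leq 1$), taking expectations in \eqref{conditional} and rewriting $1 \pm m_{n+1} = 2\,\p_x[X_{n+1} = \pm 1]$ through the mixture definition of the DERW yields the non-negative decomposition
\begin{equation*}
1 \pm m_{n+1} \;=\; \alpha_{n+1}\bigl(1 \pm (2p-1)L_n\bigr) \;+\; 2(1-\alpha_{n+1})\,\varphi^{\pm}(T^{n+1}x),\tag{$\ast$}
\end{equation*}
with $\varphi^+ := f$ and $\varphi^- := 1-f$. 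Each summand on the right is non-negative, since $1 \pm (2p-1)L_n \geq 1 - |2p-1|\,|L_n| \geq 0$. The proof then reduces to producing, in each regime of the lemma, a uniform positive lower bound for both of $1 \pm m_{n+1}$ by keeping an appropriate piece of $(\ast)$.

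For the first statement, under $p < 1$ and $\ell_{\inf}(\alpha) > 0$, the first summand of $(\ast)$ does the job whenever $p \in (0,1)$: there $|2p-1| < 1$ gives $1 \pm (2p-1)L_n \geq 1 - |2p-1| > 0$ uniformly in $n$, and $\liminf_n \alpha_{n+1} \geq \ell_{\inf}(\alpha)$, so $\liminf_n(1 \pm m_{n+1}) \geq \ell_{\inf}(\alpha)(1 - |2p-1|) > 0$. If the edge case $p = 0$ needs to be covered, one additionally requires $\limsup_n |L_n| < 1$, which in turn follows by a recursive analysis of $L_n$ using Proposition \ref{media} and the growth of $a_n$ established in Lemma \ref{ann}; under $\ell_{\inf}(\alpha) > 0$ this forces $L_n$ to stay asymptotically in $(-1,1)$, so that $1 \pm (2p-1)L_n = 1 \mp L_n$ remain bounded away from $0$.

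For the second statement, the complementary regimes (D1) and (D2) are handled using the \emph{second} summand of $(\ast)$. If $p=1$, then $1 \pm (2p-1)L_n = 1 \pm L_n$ is no longer uniformly positive; condition (T) with $p=1$ gives $\ell_{\sup}(\alpha) < 1$, hence $\liminf_n(1-\alpha_{n+1}) \geq 1 - \ell_{\sup}(\alpha) > 0$, while condition (D1) forces $0 < \liminf_n f(T^n x) \leq \limsup_n f(T^n x) < 1$, so that the second summand $2(1-\alpha_{n+1})\varphi^{\pm}(T^{n+1}x)$ has strictly positive liminf for both signs. If instead $\ell_{\inf}(\alpha) = 0$, the first summand may vanish in the limit; condition (D2) provides $\ell_{\sup}(\alpha) < 1$ together with $f$ eventually inside $(0,\,1 - p\cdot\ell_{\sup}(\alpha)) \subseteq (0,1)$, and the same computation again bounds both signs of the second summand away from $0$.

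The main technical obstacle is the edge case $p = 0$ in the first part, where $|2p-1| = 1$ collapses the straightforward lower bound extracted from the first summand of $(\ast)$. Handling it requires the auxiliary a priori control $\limsup_n |L_n| < 1$, extracted from the explicit formula in Proposition \ref{media} via a careful asymptotic analysis of the weights $(a_n)$ rather than from the one-step recursion alone. Once this auxiliary control is in place, the rest is a direct application of $(\ast)$ regime-by-regime, with the thresholds in (D1) and (D2) tailored precisely so that each degenerate summand is replaced by a non-degenerate one.
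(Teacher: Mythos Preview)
Your approach is essentially the paper's: both reduce to $\limsup_n|\E_x[X_n]|<1$ via $\Var_x[X_n]=1-\E_x[X_n]^2$, then bound $1\pm\E_x[X_{n+1}]$ through the non-negative decomposition $(\ast)$, which is exactly the expectation of \eqref{conditional} split as $\alpha_{n+1}+(1-\alpha_{n+1})$. The paper packages the same computation as the pair of sufficient conditions $\liminf_n\big(\alpha_n(1-p)+(1-\alpha_n)f(T^nx)\big)>0$ and $\limsup_n\big(\alpha_np+(1-\alpha_n)f(T^nx)\big)<1$, and then checks them in each regime; your version, which drops one non-negative summand of $(\ast)$ at a time, is an equivalent manipulation. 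For the second statement your use of the dynamical summand alone is slightly cruder than the paper's combined bounds \eqref{lim:minimal:2}--\eqref{lim:maximal:2} (you only use $\limsup f<1$ rather than the sharper thresholds in \eqref{D1}--\eqref{D2}), but it is correct.

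One genuine difference: you flag $p=0$ as an edge case requiring the a~priori control $\limsup_n|L_n|<1$. This is a real point. The paper's sufficient conditions, as derived from the bound $|S_{n-1}/(n-1)|\le 1$, are in fact written for $p\ge 1/2$; for $p<1/2$ the roles of $p$ and $1-p$ swap in the worst case, and at $p=0$ the first summand of $(\ast)$ can degenerate. Your plan to extract $\limsup_n|L_n|<1$ from the recursion $L_{n+1}=\frac{(n-\alpha_{n+1})L_n+(1-\alpha_{n+1})(2f(T^{n+1}x)-1)}{n+1}$ is the right idea---this contracts $|L_n|$ toward $\frac{1-\ell_{\inf}(\alpha)}{1+\ell_{\inf}(\alpha)}<1$---but you should actually carry it out rather than leave it as a sketch, since neither Proposition~\ref{media} nor Lemma~\ref{ann} yields it directly without this iteration.
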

\begin{proof}
    Note that \eqref{var:lim:less:than:one} is equivalent to $\limsup\limits_{n\to+\infty}|\E_x[X_n]|<1$. Since $\frac{S_{n-1}}{n-1} \in [-1,1]$, it follows from \eqref{conditional} that conditions
    \begin{eqnarray}
        \liminf_{n\to+\infty}\big(\alpha_n(1-p) + (1-\alpha_{n})f(T^{n}(x))\big) >0 \label{lim:minimal}\\
        \limsup\limits_{n\to+\infty}\big(\alpha_{n}p+(1-\alpha_{n})f(T^{n}(x))\big) <1 \label{lim:maximal}
    \end{eqnarray}
    are sufficient to verify \eqref{var:lim:less:than:one}. Moreover,
    \begin{eqnarray}
        (1-p)\ell_{\inf}(\alpha) + \big(1-\ell_{\sup}(\alpha)\big)\liminf\limits_{n\to+\infty}f\big(T^{n}(x)\big) >0 \label{lim:minimal:2}\\
        p\cdot\ell_{\sup}(\alpha)+\big(1-\ell_{\inf}(\alpha)\big)\limsup\limits_{n\to+\infty}f\big(T^{n}(x)\big) <1 \label{lim:maximal:2}
    \end{eqnarray}
    satisfy \eqref{lim:minimal} and \eqref{lim:maximal}, respectively.
    
    Let $p<1$ and $\ell_{\inf}(\alpha)>0$. Then \eqref{lim:minimal:2} is immediately satisfied. Since $f(T^n(x)) \leq 1$, we verify \eqref{lim:maximal} by noting that
    \[
        \limsup\limits_{n\to+\infty}\left(\alpha_{n}p+(1-\alpha_{n})f(T^{n}(x))\right) \leq 1-(1-p)\ell_{\inf}(\alpha_n) <1.
    \]
    
    Now, if $p=1$ or $\ell_{\inf}(\alpha)=0$, then \eqref{lim:minimal:2} and \eqref{lim:maximal:2} are straightforwardly satisfied since conditions \eqref{transition:property}, \eqref{D1}, and \eqref{D2} hold.
\end{proof}

We finish this section by proving Corollary \ref{cor:D}.

\begin{proof}[{Proof of Corollary \ref{cor:D}}]
        The proof follows in the same lines as those of the proof of Theorem \ref{CLT1} and it is an immediate consequence of Lemma \ref{lm:variance:limit} and Theorem \ref{SLLN}.
    \end{proof}


\section*{Acknowledgements}
C.F.C. thanks FAPESP (grant \#2017/10555-0), L.R.L. thanks FAPESP (grant \#2019/19056-2), and R.J.G. thanks FAPESP (grants \#2017/10555-0 and\break\#2018/04764-9) for financial support. This study was financed in part by the Coordena\c{c}\~ao de Aperfei\c{c}oamento de Pessoal de N\'ivel Superior - Brasil (CAPES) - Finance Code 001.

\small
\bibliographystyle{abbrv}  
\bibliography{references}

\begin{thebibliography}{10}

\bibitem{arnold1984ode}
V.~I. Arnold.
\newblock {\em Ordinary Differential Equations}.
\newblock Springer, 1984.

\bibitem{baur2020}
E.~{Baur}.
\newblock {On a Class of Random Walks with Reinforced Memory}, July 2020.

\bibitem{baur2016}
E.~Baur and J.~Bertoin.
\newblock Elephant random walks and their connection to p\'olya-type urns.
\newblock {\em Phys. Rev. E}, 94:052134, Nov 2016.

\bibitem{belitsky2001}
V.~Belitsky, P.~A. Ferrari, M.~V. Menshikov, and S.~Y. Popov.
\newblock A mixture of the exclusion process and the voter model.
\newblock {\em Bernoulli}, 7(1):119--144, 2001.

\bibitem{benari2019}
I.~Ben-Ari, J.~Green, T.~Meredith, H.~Panzo, and X.~Tan.
\newblock Finite-memory elephant random walk and the central limit theorem for
  additive functionals.
\newblock {\em Brazilian Journal of Probability and Statistics}, (in print).

\bibitem{bercu2017}
B.~Bercu.
\newblock A martingale approach for the elephant random walk.
\newblock {\em Journal of Physics A: Mathematical and Theoretical},
  51(1):015201, nov 2017.

\bibitem{bercu2019hypergeometric}
B.~Bercu, M.-L. Chabanol, and J.-J. Ruch.
\newblock Hypergeometric identities arising from the elephant random walk.
\newblock {\em Journal of Mathematical Analysis and Applications},
  480(1):123360, 2019.

\bibitem{bercu2019center}
B.~Bercu and L.~Laulin.
\newblock On the center of mass of the elephant random walk, 2019.

\bibitem{bercu2019multi}
B.~Bercu and L.~Laulin.
\newblock On the multi-dimensional elephant random walk.
\newblock {\em Journal of Statistical Physics}, 175(6):1146--1163, 2019.

\bibitem{bertoin2020universality}
J.~Bertoin.
\newblock Universality of noise reinforced brownian motions, 2020.

\bibitem{coletti2017}
C.~F. Coletti, R.~Gava, and G.~M. Sch\"{u}tz.
\newblock Central limit theorem and related results for the elephant random
  walk.
\newblock {\em J. Math. Phys.}, 58(5):053303, 8, 2017.

\bibitem{coletti2017strong}
C.~F. Coletti, R.~Gava, and G.~M. Schütz.
\newblock A strong invariance principle for the elephant random walk.
\newblock {\em Journal of Statistical Mechanics: Theory and Experiment},
  2017(12):123207, dec 2017.

\bibitem{coletti2019}
C.~F. Coletti, R.~J. Gava, and L.~R. de~Lima.
\newblock Limit theorems for a minimal random walk model.
\newblock {\em Journal of Statistical Mechanics: Theory and Experiment},
  2019(8):083206, aug 2019.

\bibitem{dasilva2020}
M.~da~Silva, E.~Rocha, J.~Cressoni, L.~da~Silva, and G.~Viswanathan.
\newblock Non-l{\'e}vy stable random walk propagators for a non-markovian walk
  with both superdiffusive and subdiffusive regimes.
\newblock {\em Physica A: Statistical Mechanics and its Applications},
  538:122793, 2020.

\bibitem{dimolfetta2018}
G.~Di~Molfetta, D.~O. Soares-Pinto, and S.~M.~D. Queir\'os.
\newblock Elephant quantum walk.
\newblock {\em Phys. Rev. A}, 97:062112, Jun 2018.

\bibitem{guillotin-plantard2000}
N.~Guillotin-Plantard.
\newblock Asymptotics of a dynamic random walk in a random scenery. {I}. {L}aw
  of large numbers.
\newblock {\em Ann. Inst. H. Poincar\'{e} Probab. Statist.}, 36(2):127--151,
  2000.

\bibitem{hall1980}
P.~Hall and C.~C. Heyde.
\newblock {\em Martingale limit theory and its application}.
\newblock Academic Press, Inc. [Harcourt Brace Jovanovich, Publishers], New
  York-London, 1980.
\newblock Probability and Mathematical Statistics.

\bibitem{hamilton1989new}
J.~D. Hamilton.
\newblock A new approach to the economic analysis of nonstationary time series
  and the business cycle.
\newblock {\em Econometrica: Journal of the Econometric Society}, pages
  357--384, 1989.

\bibitem{hirsch1974differential}
M.~W. Hirsch, R.~L. Devaney, and S.~Smale.
\newblock {\em Differential equations, dynamical systems, and linear algebra},
  volume~60.
\newblock Academic press, 1974.

\bibitem{knopp1954}
K.~Knopp.
\newblock {\em Theory and application of infinite series}.
\newblock Blackie \& Son, 4 edition, 1954.

\bibitem{kubota2019}
N.~Kubota and M.~Takei.
\newblock Gaussian fluctuation for superdiffusive elephant random walks.
\newblock {\em J. Stat. Phys.}, 177(6):1157--1171, 2019.

\bibitem{marquioni2019}
V.~M. Marquioni.
\newblock Multidimensional elephant random walk with coupled memory.
\newblock {\em Phys. Rev. E}, 100:052131, Nov 2019.

\bibitem{schuetz2004}
G.~M. Sch\"utz and S.~Trimper.
\newblock Elephants can always remember: Exact long-range memory effects in a
  non-markovian random walk.
\newblock {\em Phys. Rev. E}, 70:045101, Oct 2004.

\bibitem{stout1970}
W.~F. Stout.
\newblock A martingale analogue of {K}olmogorov's law of the iterated
  logarithm.
\newblock {\em Z. Wahrscheinlichkeitstheorie und Verw. Gebiete}, 15:279--290,
  1970.

\bibitem{vazquez-guevara2019}
V.~H. V{\'{a}}zquez-Guevara.
\newblock On the almost sure central limit theorem for the elephant random
  walk.
\newblock {\em Journal of Physics A: Mathematical and Theoretical},
  52(47):475201, oct 2019.

\bibitem{williams1991}
D.~Williams.
\newblock {\em Probability with martingales}.
\newblock Cambridge Mathematical Textbooks. Cambridge University Press,
  Cambridge, 1991.

\end{thebibliography}

\end{document}